\documentclass[11pt]{amsart}

\usepackage{amsmath,amssymb,amsthm}
\usepackage[margin=1.1in]{geometry}
\usepackage{microtype}
\usepackage{xcolor}
\usepackage{hyperref}
\hypersetup{colorlinks=true,linkcolor=blue!50!black,citecolor=blue!50!black,urlcolor=blue!50!black}

\newtheorem{theorem}{Theorem}[section]
\newtheorem{lemma}[theorem]{Lemma}
\newtheorem{proposition}[theorem]{Proposition}
\newtheorem{corollary}[theorem]{Corollary}
\newtheorem{question}[theorem]{Question}
\theoremstyle{definition}
\newtheorem{definition}[theorem]{Definition}
\newtheorem{remark}[theorem]{Remark}

\newcommand{\bbox}{\mathcal{B}}
\newcommand{\NN}{\mathbb{N}}
\newcommand{\eqdef}{\mathrel{\mathop:}=}

\begin{document}

\title[Bounds for double covers of discrete boxes]
{New bounds for double covers of the discrete box $\{0,1,2\}^d$}

\author{Patrick White}
\address{%
}
\email{p@pwhite.org}

\date{\today}

\begin{abstract}
A \emph{proper sub-box} of $A=\{0,1,2\}^d$ is a product $S_1\times\dots\times S_d$ with each
$\varnothing\neq S_i\subsetneq\{0,1,2\}$. A \emph{double cover} is a finite multiset of proper
sub-boxes covering every point of $A$ exactly twice; write $f(d)$ for the minimum number of
boxes in a double cover. Leader, Mili\'cevi\'c and Tan asked whether $f(d)\ge 2^d$ for all $d$
(recorded by Buci\'c--Lidick\'y--Long--Wagner and restated as Question~4.1 of the PatternBoost
paper of Charton--Ellenberg--Wagner--Williamson), in analogy with the classical theorem of
Alon--Bohman--Holzman--Kleitman that a \emph{partition} into proper sub-boxes needs at least
$2^d$ boxes. Prior to this work no lower bound better than the trivial volume bound was known,
and the question was open for every $d\ge 2$.

We prove the first nontrivial lower bounds. A modular refinement of the parity argument gives
$f(d)\ge 2^{d+1}/(d+1)$; a slicing argument crossed with the volume identity gives
$f(4)\ge 19$ and $f(5)\ge 33$, both strictly larger than $2^d$, thereby \emph{resolving the
question affirmatively for $d=4$ and $d=5$} --- the first cases beyond the trivially known
$d\le 3$. A finer ``line rigidity'' argument yields $f(6)\ge 60$, breaking the barrier that
all profile-statistic methods (which we show provably cannot exceed $57$ at $d=6$) run into.
The entire lower-bound development is formally verified in Lean~4: the theorem
$f(6)\ge 60$ is machine-checked with no hypotheses and depends only on the three standard
axioms of the Lean/Mathlib kernel.

On the upper-bound side we give a dimension-lifting construction $f(r{+}3)\le 6\cdot 2^r+3f(r)$,
yielding $f(6)\le 81$ (improving the previously best known $82$) and the asymptotic bound
$f(d)\le(\tfrac65+o(1))2^d$; a four-step refinement improves the constant to $\tfrac87$. This
makes partial progress on the PatternBoost authors' explicit problem of reducing their
constant $1.28$, and refutes the natural closed-form guess $f(d)=5\cdot 2^{d-2}+1$ from $d=7$
on. Together these give $60\le f(6)\le 81$. Finally we isolate the obstruction on the
construction side --- a ``$S+c=2^j+1$'' phenomenon, every skeleton construction we examine sitting
exactly one box past the partition bound --- and show it is of a piece with the
Leader--Mili\'cevi\'c--Tan question itself.
\end{abstract}

\maketitle

\section{Introduction}\label{sec:intro}

\subsection{The problem}
Fix the three-element alphabet $[3]\eqdef\{0,1,2\}$ and the discrete box $A\eqdef[3]^d$. A
\emph{proper sub-box} of $A$ is a set
\[
  B=S_1\times S_2\times\dots\times S_d,\qquad \varnothing\neq S_i\subsetneq[3]\ \ (1\le i\le d),
\]
so each side $S_i$ has one or two elements. Write $k(B)\eqdef\#\{i: |S_i|=2\}$ for the number of
two-element sides, so that $|B|=2^{k(B)}$.

A classical theorem of Alon, Bohman, Holzman and Kleitman~\cite{ABHK} (answering a question of
Kearnes and Kiss) states that any \emph{partition} of $A$ into proper sub-boxes --- a family in
which every point lies in exactly one box --- has at least $2^d$ members, and this is tight. The
proof is a parity argument over ``odd'' sub-boxes that we recall and refine in
Section~\ref{sec:mod4}.

Leader, Mili\'cevi\'c and Tan asked the natural relaxation: what if every point must be covered
exactly \emph{twice}?

\begin{definition}
A \emph{double cover} of $A=[3]^d$ is a finite multiset $\bbox=\{B_1,\dots,B_m\}$ of proper
sub-boxes such that every point of $A$ lies in exactly two of the $B_j$ (counted with
multiplicity). Let $f(d)$ denote the minimum size $m$ of a double cover of $[3]^d$.
\end{definition}

\begin{question}[Leader--Mili\'cevi\'c--Tan; {\cite[\S5]{BLLW}}, {\cite[Question 4.1]{PatternBoost}}]
\label{q:main}
Is $f(d)\ge 2^d$ for every $d$?
\end{question}

We attribute Question~\ref{q:main} to Leader, Mili\'cevi\'c and Tan, who pose it---in a more
general form---as Question~4 of their paper \emph{Decomposing the complete $r$-graph}~\cite{LMT};
it is restated for double covers in the concluding section of
Buci\'c--Lidick\'y--Long--Wagner~\cite[\S5]{BLLW} and as Question~4.1 of
Charton--Ellenberg--Wagner--Williamson~\cite[\S4.5]{PatternBoost}.\footnote{Leader, Mili\'cevi\'c
and Tan ask whether any \emph{uniform cover} of $X^n$ by proper boxes---a family in which every
point is covered the same number of times---must use at least $2^n$ boxes. The double-cover
question is the case in which every point is covered exactly twice; the two sources above record
that specialization.}

The question is genuinely harder than its partition analogue, and the reason is structural. The
Alon--Bohman--Holzman--Kleitman argument is a count modulo $2$: it survives whenever every point
is covered an \emph{odd} number of times. Cover every point exactly twice and the parity signal
vanishes --- $2\equiv 0\pmod 2$ --- so the tool that resolves the partition problem is blind to
the double cover. Recovering information from the surviving ``$2$'' is the central difficulty.

\subsection{Prior work}
Buci\'c--Lidick\'y--Long--Wagner~\cite[\S5]{BLLW} found, computationally, double covers of
$[3]^3$ with $11$ boxes and of $[3]^4$ with $21$ boxes, and wrote plainly that ``we have not been
able to beat $2^d$''. The PatternBoost authors~\cite[\S4.5]{PatternBoost}, using a
machine-learning-guided local search, added a $41$-box double cover of $[3]^5$ (their Table~5
records the best sizes found, $6,11,21,41$ for $d=2,3,4,5$), proved the general upper bound
\[
  f(d)\le \tfrac{41}{32}\,2^d\approx 1.28\cdot 2^d\qquad(d\ge 5)
\]
(their Theorem~4.2, obtained by tiling the $d=5$ construction), and posed as open the problem of
reducing this constant, remarking that ``pushing this constant below~$1$ would resolve
Question~4.1.'' For $d=6$ they report that ``neither Gurobi nor PatternBoost succeeded in coming
up with a construction using fewer than $82$ boxes'' and that ``we did not even get close.'' On
the lower-bound side, nothing beyond the trivial volume bound $f(d)\ge 2\cdot(3/2)^d$ (a
consequence of the volume identity, Lemma~\ref{lem:volume}) was known, and Question~\ref{q:main}
was open for every $d\ge 2$.

\subsection{Results}
We make progress on both sides and pin down $f(6)$ to within a factor of $1.35$.

\medskip\noindent\textbf{Lower bounds.}
\begin{itemize}
\item \emph{A modular bound} (Theorem~\ref{thm:mod4}): $f(d)\ge 2^{d+1}/(d+1)$, from refining
the parity argument to work modulo~$4$.
\item \emph{Slicing} (Theorem~\ref{thm:slice}): combining a slicing inequality with the volume
identity and the base value $f(3)=11$ gives
\[
  f(4)\ge 19,\quad f(5)\ge 33,\quad f(6)\ge 57,\quad f(7)\ge 97,\quad f(8)\ge 164.
\]
Since $19>16=2^4$ and $33>32=2^5$, this \emph{resolves Question~\ref{q:main} affirmatively for
$d=4$ and $d=5$} (Corollary~\ref{cor:resolve}) --- the first progress past the trivially known
small cases.
\item \emph{Line rigidity} (Theorem~\ref{thm:linerigidity}): $f(6)\ge 60$. This exceeds the
slicing value $57$, which is significant: we show (Section~\ref{sec:limits}) that $57$ is the
exact ceiling of every bound depending only on the distribution of the $k(B_j)$, so reaching
$60$ requires genuinely finer structure.
\end{itemize}

\medskip\noindent\textbf{Upper bounds.}
\begin{itemize}
\item \emph{The leapfrog lift} (Theorem~\ref{thm:leapfrog}): from any double cover of $[3]^r$
with $N$ boxes one builds a double cover of $[3]^{r+3}$ with $6\cdot 2^r+3N$ boxes, so
$f(r{+}3)\le 6\cdot 2^r+3f(r)$. Taking the $11$-box cover of $[3]^3$ gives
\[
  f(6)\le 81,
\]
improving the previously best known value $82$, and iterating gives
$f(d)\le(\tfrac65+o(1))\,2^d$.
\item \emph{A four-step refinement} (Theorem~\ref{thm:fourstep}): $f(r{+}4)\le 8\cdot 2^r+9f(r)$,
giving $f(d)\le(\tfrac87+o(1))\,2^d$ and, e.g., $f(9)\le 625$.
\end{itemize}
These reduce the PatternBoost constant from $1.28$ to $1.2$ and then to $8/7\approx 1.143$,
partial progress on their stated open problem (pushing below~$1$, which would resolve
Question~\ref{q:main}, remains open), and they refute the natural closed-form guess
$f(d)=5\cdot 2^{d-2}+1$ from $d=7$ on (Remark~\ref{rem:conj}). Combining the two sides,
\[
  \boxed{\,60\le f(6)\le 81\,.}
\]

\medskip\noindent\textbf{The construction frontier.}
In Section~\ref{sec:wall} we study when a ``skeleton-plus-holes'' construction is efficient and
isolate a clean numerical obstruction: every such construction we know satisfies $S+c=2^j+1$,
where $S$ is the number of all-pair skeleton boxes and $c$ the number of patch boxes. We prove
the inequality $S+c\ge 2^j+1$ in several regimes (Theorem~\ref{thm:wall}) and observe that
reaching $S+c=2^j$ for \emph{some} construction --- which would disprove Question~\ref{q:main}
from the construction side --- requires a support as hard to partition as to double-cover
(Proposition~\ref{prop:abhk-barrier}). The construction side of the problem thus meets the
conjecture exactly at the line $f(d)=2^d$.

\medskip\noindent\textbf{What we believe.}
We conjecture that the answer to Question~\ref{q:main} is \emph{yes}, that $f(d)\ge 2^d$ for all
$d$; the evidence that persuades us is the construction wall rather than the lower bounds. We set
out the reasoning, with an honest caveat, in Section~\ref{sec:belief}.

\medskip\noindent\textbf{Formal verification.}
The lower-bound development is formalized in Lean~4 with Mathlib (Section~\ref{sec:lean}). The
theorem $f(6)\ge 60$ is machine-checked with \emph{no} hypotheses and depends only on the three
standard axioms \texttt{propext}, \texttt{Classical.choice}, \texttt{Quot.sound}; the
$d=4,5$ resolution is verified modulo the single cited input $f(3)\ge 11$, established by
exhaustive integer programming. A note on methodology, including the role of an AI research
process in obtaining these results, appears in Section~\ref{sec:methodology}.

\subsection{Notation}
Throughout, $A=[3]^d$, and a ``box'' always means a proper sub-box of $A$. For a box
$B=\prod_i S_i$ we write $k(B)=\#\{i:|S_i|=2\}$, so $|B|=2^{k(B)}$. For a double cover
$\bbox=\{B_1,\dots,B_m\}$ we abbreviate $k_j=k(B_j)$, and we record the \emph{$k$-profile}
$(x_0,\dots,x_d)$ where $x_k=\#\{j:k_j=k\}$, so $\sum_k x_k=m$.

\section{Preliminaries}\label{sec:prelim}

\begin{lemma}[Volume identity]\label{lem:volume}
For any double cover $\bbox=\{B_1,\dots,B_m\}$ of $[3]^d$,
\[
  \sum_{j=1}^m 2^{k_j}=2\cdot 3^d.
\]
\end{lemma}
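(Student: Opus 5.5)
The plan is to double count the incidences between points of $A$ and boxes of $\bbox$. Consider the set of pairs
\[
  I\eqdef\{(x,j): x\in A,\ 1\le j\le m,\ x\in B_j\},
\]
where $j$ ranges over indices, so that repeated boxes in the multiset are counted separately. I will compute $|I|$ in two ways.

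Summing first over $j$: for each fixed $j$ the number of admissible $x$ is $|B_j|$. Since $B_j=\prod_i S_i$ is a product and each $|S_i|\in\{1,2\}$, we get $|B_j|=\prod_i|S_i|=2^{k_j}$. Hence $|I|=\sum_j 2^{k_j}$.

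Summing first over $x$: for each $x\in A$ the number of $j$ with $x\in B_j$ is exactly $2$, by the definition of a double cover (multiplicity counted). Hence $|I|=2|A|=2\cdot 3^d$.

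Equating the two counts gives the identity. There is no real obstacle here. The only points needing care are that the multiset is handled by indexing the boxes, and that $|B|=2^{k(B)}$ relies on properness, since that is what forces every side to have size $1$ or $2$.
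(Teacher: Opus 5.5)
Your proof is correct and is essentially the paper's argument: double count point--box incidences, getting $\sum_j |B_j| = \sum_j 2^{k_j}$ from properness on one side and $2\cdot 3^d$ from the exactly-twice covering condition on the other. Indexing the boxes to handle the multiset is exactly what the paper does implicitly.
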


\begin{proof}
Counting incidences $\sum_j |B_j|=\sum_{x\in[3]^d}\#\{j:x\in B_j\}=\sum_x 2=2\cdot 3^d$, and
$|B_j|=\prod_i|S_i^j|=2^{k_j}$.
\end{proof}

Two immediate consequences. First, since each $2^{k_j}\le 2^d$, Lemma~\ref{lem:volume} gives the
\emph{volume bound} $m\ge 2\cdot 3^d/2^d=2(3/2)^d$ (at $d=6$ this reads $f(6)\ge 23$). Second,
the base values needed below are known exactly.

\begin{proposition}[Base cases]\label{prop:base}
$f(2)=6$ and $f(3)=11$.
\end{proposition}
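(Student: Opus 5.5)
Each equality splits into an upper bound, given by an explicit double cover, and a matching lower bound.

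\emph{Upper bounds.} For $d=2$ the six lines of $[3]^2$ work: the three rows $\{i\}\times[3]$ are not proper, so instead we take the six boxes $\{i\}\times\{i+1,i+2\}$ and $\{i+1,i+2\}\times\{i\}$ for $i\in\mathbb{Z}/3$. A point $(a,b)$ with $a\neq b$ lies in exactly one box of each family. A diagonal point $(a,a)$ lies in neither family, so this alone fails on the diagonal. I would therefore locate a correct $6$-box cover by hand, guided by the volume identity (Lemma~\ref{lem:volume}). This requires $\sum 2^{k_j}=18$ with each $k_j\le 2$, so six boxes force a profile such as $(x_0,x_1,x_2)=(0,6,0)$ or $(0,2,4)$, or $(2,0,4)$. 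Trying four $2\times2$ squares $\{0,1\}^2,\{1,2\}^2,\{0,1\}\times\{1,2\},\{1,2\}\times\{0,1\}$ covers the centre $4$ times, which is too many. The profile $(0,6,0)$, six dominoes of total volume $12<18$, is infeasible. So I would search among $(0,2,4)$ and $(2,0,4)$, e.g.\ squares $\{0,1\}\times\{0,1\}$, $\{0,2\}\times\{0,2\}$, $\{1,2\}\times\{0,2\}$, $\{0,2\}\times\{1,2\}$ plus two dominoes fixing the deficits, checking multiplicity $2$ pointwise. For $d=3$ I would exhibit the $11$-box cover of Buci\'c--Lidick\'y--Long--Wagner explicitly and verify it point by point (27 points).

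\emph{Lower bounds.} For $d=2$, suppose $m\le5$. The volume identity gives $\sum 2^{k_j}=18$ with $2^{k_j}\in\{1,2,4\}$, so at least $\lceil 18/4\rceil=5$ boxes are needed, and $m=5$ forces profile $(0,1,4)$: four $2\times2$ squares plus one domino. A $2\times 2$ square is $S\times T$ with $|S|=|T|=2$, i.e.\ it misses one row and one column. Consider each row $\{a\}\times[3]$: it must receive total multiplicity $6$. A square contributes $2$ to the row if it contains row $a$, and $0$ otherwise. The domino contributes at most $2$. Summing over rows, the squares give $4\cdot 2\cdot 2=16$ and the domino gives $2$ (horizontal) or $1+1$ (vertical), for a total of $18$, so it is consistent. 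I therefore need a finer argument. Each row needs multiplicity $6$, and squares give even amounts, so the domino's row contributions must make each row's total $6$. This forces the domino to be horizontal, lying in a single row $a$ that gets $4$ from squares. The other rows get $6$ from squares, meaning they are each contained in $3$ squares. By symmetry the same holds for columns, which forces the domino to be vertical as well, a contradiction. That settles $f(2)\ge 6$.

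For $d=3$, the main obstacle is showing $f(3)\ge 11$. Volume alone gives $\sum 2^{k_j}=54$ with $2^{k_j}\le 8$, hence only $m\ge 7$. My first attempt would be the slice argument. Fixing coordinate $1$ to value $a$, each box containing $a$ in its first side restricts to a box of $[3]^2$, and these restrictions double-cover $[3]^2$. A restricted box is proper unless its trace is all of $[3]^2$, which is impossible because the original sides are proper. So every slice uses at least $6$ boxes. A box with $|S_1|=2$ meets two slices, giving $\sum_j|S_1^j|\ge 18$ and, combined with volume, profile constraints. Enumerating profiles with $m\le10$ satisfying these and the analogous constraints for all three coordinates should leave few cases. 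I would eliminate the remaining cases by a small integer-programming check, as the paper indicates (``exhaustive integer programming''), rather than by hand. I expect this residual case analysis to be the real difficulty.
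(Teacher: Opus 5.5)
\textbf{Gap: the upper bound $f(2)\le 6$ is never established.} Your lower-bound half is sound, but you never produce a $6$-box cover, and the route you sketch toward one contains errors. With $m=6$, the system $x_0+x_1+x_2=6$, $x_0+2x_1+4x_2=18$ gives $x_1+3x_2=12$. So the only admissible profiles are $(2,0,4)$ and $(0,3,3)$. The profile $(0,2,4)$ you propose to search has volume $20$, not $18$, and $(0,3,3)$ is missing from your list.

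Your trial configuration also cannot be repaired. The squares $\{0,2\}\times\{0,2\}$, $\{1,2\}\times\{0,2\}$ and $\{0,2\}\times\{1,2\}$ all contain $(2,2)$, which is therefore covered three times before any dominoes are added. The fix is immediate: take the one-dimensional cover $\{0,1\},\{1,2\},\{0,2\}$ in the first coordinate times the partition $\{\{0,1\},\{2\}\}$ in the second. That is, use the six boxes $P\times\{0,1\}$ and $P\times\{2\}$ for the three pairs $P$. Each $x$ lies in exactly two pairs and each $y$ in exactly one part, so every point is covered twice, with profile $(0,3,3)$. Alternatively, the four head pair-boxes plus a doubled singleton in the proof of Theorem~\ref{thm:leapfrog} give a $(2,0,4)$ example. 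For $d=3$, citing the $11$-box cover is exactly what the paper does.

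\textbf{Lower bounds: a different, computation-free route.} The paper gets both $f(2)\ge 6$ and $f(3)\ge 11$ from exhaustive integer programming; you argue by hand.

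Your parity argument for $d=2$ is correct. By volume, $m\le 5$ forces the profile $(0,1,4)$. Each square contributes an even amount to every row and every column. The lone domino contributes $1$ to each of two rows (if vertical) or to each of two columns (if horizontal), which contradicts the even line total $6$.

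For $d=3$ your slicing plan is also right, and the residual case analysis you anticipate does not exist. Summing $b_i\ge 18-m$ over the three coordinates gives $\sum_j k_j\ge 54-3m$. The bound $k\le 1+2^{k-2}$ (Lemma~\ref{lem:convex} with $q=2$) together with the volume identity gives $\sum_j k_j\le m+\tfrac{54}{4}$. Hence $54-3m\le m+\tfrac{27}{2}$, so $m\ge \tfrac{81}{8}>10$, with no integer programming needed. In other words, Lemmas~\ref{lem:slice} and~\ref{lem:convex} applied with $L_2=6$ already yield $f(3)\ge 11$.

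What your route buys is a short, human-checkable proof of both base lower bounds. In particular, it would remove the computational hypothesis $f(3)\ge 11$ on which the paper's formalized $d=4,5$ results are conditioned. The paper's computational route is uniform but opaque, and it adds nothing logically for these two values.
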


The covers of sizes $6$ and $11$ are due to~\cite{BLLW,PatternBoost}; the matching lower bounds
$f(2)\ge 6$ and $f(3)\ge 11$ --- i.e.\ optimality --- are established by exhaustive integer
programming over the (finite) set of proper sub-boxes, which we carried out independently
(Section~\ref{sec:lean} and the accompanying code). Only $f(3)\ge 11$ is used as an input below.

\section{A modular lower bound}\label{sec:mod4}

We first refine the Alon--Bohman--Holzman--Kleitman parity argument. Call a sub-box
$C=T_1\times\dots\times T_d$ of $[3]^d$ \emph{odd} if each $T_i\in\{\{0\},\{1\},\{2\},[3]\}$ --- one
of the four odd-cardinality subsets of $[3]$. There are $4^d$ odd sub-boxes, and $|C|$ is a power
of~$3$, hence odd.

\begin{theorem}\label{thm:mod4}
$f(d)\ge \dfrac{2^{d+1}}{d+1}$.
\end{theorem}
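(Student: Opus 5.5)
The plan is to run the Alon--Bohman--Holzman--Kleitman counting over the $4^d$ odd sub-boxes, but to read the incidence count modulo $4$ rather than modulo $2$. Fix a double cover $\bbox=\{B_1,\dots,B_m\}$ and an odd box $C=T_1\times\dots\times T_d$. Since every point of $C$ is covered exactly twice,
\[
  \sum_{j=1}^m |B_j\cap C| = 2|C| \equiv 2 \pmod 4,
\]
because $|C|$ is a power of $3$ and hence odd. The parity information is lost, but the residue $2$ modulo $4$ is not. It forces one of two things: either some $j$ has $|B_j\cap C|\equiv 2\pmod 4$, or at least two indices $j$ have $|B_j\cap C|$ odd. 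Indeed, if no term is $\equiv 2\pmod 4$, the nonzero contributions modulo $4$ come only from odd terms. Their number must be even, since the total is even, and it must be nonzero, since the total is not $\equiv 0\pmod 4$.

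Next I compute $|B\cap C|$ for a box $B=\prod S_i$. We have $B\cap C=\prod_i (S_i\cap T_i)$.
\begin{itemize}
\item If $T_i=[3]$, the factor is $|S_i|\in\{1,2\}$.
\item If $T_i$ is a singleton, the factor is $1$ when $T_i\subseteq S_i$ and $0$ otherwise.
\end{itemize}
So $|B\cap C|$ is either $0$ or $2^t$, where $t$ counts the coordinates with $T_i=[3]$ and $|S_i|=2$.

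\emph{Odd intersections.} $|B\cap C|$ is odd exactly when $t=0$ and every singleton $T_i$ lies in $S_i$. In each coordinate there are then exactly $2$ admissible choices of $T_i$. If $|S_i|=1$, these are $[3]$ and $S_i$ itself. If $|S_i|=2$, these are the two singletons inside $S_i$. Hence $B$ meets exactly $2^d$ odd boxes oddly, as in the ABHK proof.

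\emph{Intersections $\equiv 2\pmod 4$.} These occur exactly when $t=1$. To count them, choose the one coordinate $i$ with $|S_i|=2$ and $T_i=[3]$, which can be done in $k(B)$ ways. Every other coordinate again has $2$ admissible choices. This gives $k(B)\,2^{d-1}$ odd boxes.

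Now double count. Let $a(C)$ be the number of $j$ with $|B_j\cap C|$ odd, and $b(C)$ the number with $|B_j\cap C|\equiv 2\pmod 4$. The dichotomy above gives $\tfrac12 a(C)+b(C)\ge 1$ for every odd $C$. Summing over all $4^d$ odd boxes and exchanging the order of summation gives
\[
  4^d\le \sum_{j=1}^m\Bigl(\tfrac12\cdot 2^d + k_j 2^{d-1}\Bigr)=2^{d-1}\sum_{j=1}^m (1+k_j).
\]
Since the boxes are proper, $k_j\le d$, so the right-hand side is at most $m(d+1)2^{d-1}$. Rearranging gives $m\ge 2^{d+1}/(d+1)$.

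The main obstacle is the first step: identifying the correct substitute for "some box meets $C$ oddly." In a double cover, odd intersections can cancel in pairs, so the argument must be the weighted dichotomy ($\tfrac12$ per odd intersection, $1$ per $2 \bmod 4$ intersection), not a single-witness claim. After that, the per-box counts $2^d$ and $k\,2^{d-1}$ are routine coordinatewise bookkeeping.
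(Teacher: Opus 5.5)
Your proof is correct and follows the paper's argument essentially step for step. You reduce $\sum_j|B_j\cap C|=2|C|$ modulo $4$ to get $\tfrac12 a(C)+b(C)\ge 1$ (the paper phrases this as $\tfrac12 n_0+n_1\equiv 1\pmod 2$, you via the equivalent dichotomy), then use the coordinatewise counts $2^d$ and $k\,2^{d-1}$, and double count over the $4^d$ odd boxes.
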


\begin{proof}
For a box $B$ with sides $S_i$ and an odd $C$ with sides $T_i$,
$|B\cap C|=\prod_i|S_i\cap T_i|$, and each factor $|S_i\cap T_i|\in\{0,1,2\}$ since $|S_i|\le 2$;
thus $|B\cap C|$ is $0$ or a power of~$2$. Concretely $|B\cap C|=2^{a}$, where $a$ is the number
of coordinates with $T_i=[3]$ and $|S_i|=2$, and $|B\cap C|=0$ if some singleton $T_i$ is not
contained in $S_i$.

\emph{Per-$C$ constraint modulo $4$.} Fix an odd $C$. As the cover is double,
\[
  \sum_{j}|B_j\cap C|=\sum_{x\in C}\#\{j:x\in B_j\}=2|C|\equiv 2\pmod 4,
\]
because $|C|$ is odd. Each $|B_j\cap C|$ is $0$, $1$, $2$, or a multiple of $4$. Writing
$n_0(C)=\#\{j:|B_j\cap C|=1\}$ and $n_1(C)=\#\{j:|B_j\cap C|=2\}$, reduction mod~$4$ gives
$n_0(C)+2n_1(C)\equiv 2\pmod 4$. Hence $n_0(C)$ is even and
$\tfrac12 n_0(C)+n_1(C)\equiv 1\pmod 2$; in particular
\begin{equation}\label{eq:perC}
  \tfrac12 n_0(C)+n_1(C)\ge 1 .
\end{equation}
(This is exactly where the double cover survives: modulo $2$ the right-hand side is $0$ and the
argument collapses; modulo $4$ the ``$2$'' persists.)

\emph{Incidence counts.} Fix a box $B$ with $k=k(B)$. A direct coordinatewise count gives
\[
  \#\{C\text{ odd}:|B\cap C|=1\}=2^d,\qquad
  \#\{C\text{ odd}:|B\cap C|=2\}=k\cdot 2^{d-1}.
\]
Indeed, in each coordinate exactly two of the four odd $T_i$ produce factor~$1$, giving the first
identity; for factor~$2$ one chooses the unique coordinate giving factor~$2$ (a two-element side
paired with $T_i=[3]$: $k$ choices) and a factor-$1$ odd $T$ in each remaining coordinate.

\emph{Summation.} Summing~\eqref{eq:perC} over all $4^d$ odd $C$ and exchanging order,
\[
  4^d\le\sum_C\Bigl(\tfrac12 n_0(C)+n_1(C)\Bigr)
       =\sum_j\Bigl(\tfrac12\cdot 2^d+k_j\cdot 2^{d-1}\Bigr)
       =\sum_j (k_j+1)\,2^{d-1}\le m\,(d+1)\,2^{d-1},
\]
using $k_j\le d$. Therefore $m\ge 4^d/((d+1)2^{d-1})=2^{d+1}/(d+1)$.
\end{proof}

At $d=6$ Theorem~\ref{thm:mod4} gives $f(6)\ge \lceil 128/7\rceil=19$; it is dominated by the
slicing bound below for small $d$, but is of independent interest as the unique bound here that
descends directly from the parity argument.

\section{Slicing, and the resolution of \texorpdfstring{$d=4,5$}{d=4,5}}\label{sec:slice}

The next bound bootstraps a lower bound in dimension $d-1$ into one in dimension $d$.

\begin{lemma}[Slicing]\label{lem:slice}
Let $L_{d-1}$ be any lower bound for $f(d-1)$. In any double cover $\{B_1,\dots,B_m\}$ of
$[3]^d$,
\[
  \sum_{j} k_j\ \ge\ d\,(3L_{d-1}-m).
\]
\end{lemma}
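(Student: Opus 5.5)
Fix a coordinate $i\in\{1,\dots,d\}$ and a value $v\in[3]$, and look at the slice $H_{i,v}=\{x\in A: x_i=v\}\cong[3]^{d-1}$. The plan is to show that the boxes meeting $H_{i,v}$, restricted to it, form a double cover of $[3]^{d-1}$, and then to count slices in two ways.

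For a box $B=\prod S_\ell$ with $v\in S_i$, the restriction $B\cap H_{i,v}$ is the product of the remaining sides $\prod_{\ell\ne i}S_\ell$. This is a proper sub-box of $[3]^{d-1}$, since every remaining side is still nonempty and proper. Every point of $H_{i,v}$ is covered exactly twice by these restrictions, so they form a double cover of $[3]^{d-1}$. Writing $m_{i,v}=\#\{j: v\in S^j_i\}$, we get $m_{i,v}\ge f(d-1)\ge L_{d-1}$.

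Now sum over $v$ with $i$ fixed. A box contributes to $m_{i,v}$ once for each element of $S^j_i$, so
\[
\sum_{v\in[3]} m_{i,v}=\sum_j |S^j_i| = m+\#\{j:|S^j_i|=2\}.
\]
Hence $\#\{j:|S^j_i|=2\}\ge 3L_{d-1}-m$ for each $i$. Summing over $i=1,\dots,d$, the left side becomes $\sum_j k_j$, and this gives $\sum_j k_j\ge d(3L_{d-1}-m)$.

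There is no real obstacle. The only point needing care is that restrictions remain proper boxes, which is immediate since only coordinate $i$ is dropped. The counting is also valid with multiplicities, because double covers are multisets.
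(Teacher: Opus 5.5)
Your proposal is correct and matches the paper's proof: slice along $x_i=v$ to get a double cover of $[3]^{d-1}$ by the boxes with $v\in S_i^j$, sum over $v$ to obtain $m+b_i\ge 3L_{d-1}$, and sum over $i$. Your explicit checks that the restricted boxes stay proper and that multiplicities are handled correctly are fine.
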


\begin{proof}
Fix a coordinate $i$ and a value $v\in[3]$. The slice $\{x:x_i=v\}\cong[3]^{d-1}$ is doubly
covered by the projections to the other coordinates of those $B_j$ with $v\in S_i^j$: a point
$y$ of the slice lies in the projection of $B_j$ iff $v\in S_i^j$ and $y\in\prod_{\ell\neq i}
S_\ell^j$, i.e.\ iff $(y,v)\in B_j$, which happens exactly twice. Hence
$\#\{j:v\in S_i^j\}\ge f(d-1)\ge L_{d-1}$. Summing over the three values $v$,
\[
  \sum_{v\in[3]}\#\{j:v\in S_i^j\}=\sum_j |S_i^j|=m+b_i\ \ge\ 3L_{d-1},
\]
where $b_i=\#\{j:|S_i^j|=2\}$. Thus $b_i\ge 3L_{d-1}-m$, and summing over $i$ gives
$\sum_i b_i=\sum_j k_j\ge d(3L_{d-1}-m)$.
\end{proof}

\begin{lemma}[Convexity]\label{lem:convex}
For every integer $q\ge 0$ and every integer $k\ge 0$, $k\le (q-1)+2^{k-q}$. Consequently, for
any double cover with $m$ boxes and any $q$,
\[
  \sum_j k_j\ \le\ m(q-1)+2^{-q}\sum_j 2^{k_j}=m(q-1)+\frac{2\cdot 3^d}{2^q}.
\]
\end{lemma}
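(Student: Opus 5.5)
The plan is to prove the scalar inequality $k\le (q-1)+2^{k-q}$ first, by studying the function $g(k)=2^{k-q}-k+q-1$ on the integers. Then we apply it to each box and sum, using the volume identity (Lemma~\ref{lem:volume}).

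\emph{Step 1 (the scalar inequality).} The function $g$ vanishes at $k=q$ and at $k=q-1$:
\[
g(q)=1-q+q-1=0,\qquad g(q-1)=\tfrac12-(q-1)+q-1=\tfrac12 .
\]
So it is zero at $k=q$ and positive at $k=q-1$. Its forward difference is
\[
g(k+1)-g(k)=2^{k-q}-1 .
\]
This difference is $\ge 0$ for $k\ge q$ and $<0$ for $k<q$. Hence $g$ is nonincreasing on integers $k\le q$ and nondecreasing on integers $k\ge q$, so its minimum over the integers is attained at $k=q$, where $g(q)=0$. This is just the statement that the convex function $2^{k-q}$ lies above its secant line through $k=q$ and $k=q+1$ (the line $k-q+1$), evaluated at integer points outside the open interval $(q,q+1)$. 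Therefore $g(k)\ge 0$ for every integer $k$, in particular for all $k\ge 0$. The hypothesis $q\ge 0$ is not even needed; for $q=0$ it reads $k\le 2^k-1$.

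\emph{Step 2 (summing over the cover).} Apply Step 1 with $k=k_j$ for each box $B_j$ and sum over $j=1,\dots,m$:
\[
\sum_j k_j\le m(q-1)+2^{-q}\sum_j 2^{k_j}.
\]
Then substitute $\sum_j 2^{k_j}=2\cdot 3^d$ from Lemma~\ref{lem:volume}.

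There is no real obstacle here. The only point needing care is that the inequality is used at integers only: over the reals it fails strictly between $q$ and $q+1$. For that reason the argument goes through the discrete difference rather than a tangent line.
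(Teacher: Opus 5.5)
Your proof is correct and is essentially the paper's argument: the paper also invokes the secant bound for the convex function $k\mapsto 2^k$ (equality at $k=q$ and $k=q+1$) at integer points, then sums over the boxes and applies the volume identity, and your forward-difference computation just makes that secant bound explicit. One harmless slip: you write that $g$ vanishes at $k=q-1$, but your own computation gives $g(q-1)=\tfrac12$; the second zero is at $k=q+1$, since $g(q+1)=2-(q+1)+q-1=0$.
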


\begin{proof}
The inequality $k\le (q-1)+2^{k-q}$ is the secant/tangent bound for the convex function
$k\mapsto 2^k$, with equality exactly at $k=q$ and $k=q+1$. Summing over $j$ and applying the
volume identity (Lemma~\ref{lem:volume}) gives the stated bound.
\end{proof}

Combining Lemmas~\ref{lem:slice} and~\ref{lem:convex}: if there is an integer $q$ with
$d(3L_{d-1}-m)>m(q-1)+2\cdot 3^d/2^q$, then $m$ is impossible. Equivalently --- and this is the
form we verify --- $f(d)$ is at least the least $m$ for which the integer system
\begin{equation}\label{eq:feas}
  x_0,\dots,x_d\ge 0,\quad \sum_k x_k=m,\quad \sum_k x_k 2^k=2\cdot 3^d,
  \quad \sum_k k\,x_k\ \ge\ d(3L_{d-1}-m)
\end{equation}
is feasible (here $x_k$ is the number of boxes with $k(B_j)=k$). Necessity is immediate from
Lemmas~\ref{lem:volume} and~\ref{lem:slice}; the bound is the smallest feasible $m$.

\begin{theorem}\label{thm:slice}
With base value $f(3)=11$, the feasibility test~\eqref{eq:feas} gives
\[
  f(4)\ge 19,\quad f(5)\ge 33,\quad f(6)\ge 57,\quad f(7)\ge 97,\quad f(8)\ge 164.
\]
\end{theorem}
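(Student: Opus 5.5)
The plan is to run the feasibility test~\eqref{eq:feas} inductively in $d$, feeding each new bound back in as $L_{d-1}$: $L_3=11$ from Proposition~\ref{prop:base}, then $L_4=19$, $L_5=33$, $L_6=57$, $L_7=97$, which should yield $L_8=164$. For each $d$ it suffices to show that~\eqref{eq:feas} is infeasible at $m=(\text{claimed value})-1$. This is enough for all smaller $m$ as well. Decreasing $m$ raises the slicing requirement $d(3L_{d-1}-m)$. It also only lowers the best achievable $\sum_k kx_k$: a cover with fewer boxes at fixed total volume $2\cdot 3^d$ has less room for large $\sum_j k_j$, which is visible in the bound $m(q-1)+2\cdot3^d/2^q$ of Lemma~\ref{lem:convex}. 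So a single infeasibility certificate at the threshold finishes each dimension. Necessity of~\eqref{eq:feas} for an actual cover is already given by Lemmas~\ref{lem:volume} and~\ref{lem:slice}.

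The first tool is the real relaxation via Lemma~\ref{lem:convex}, choosing $q$ near $\log_2(2\cdot3^d/m)$. For $d=4$, $m=18$, the slicing requirement is $4(33-18)=60$. Taking $q=3$ gives at most $36+162/8=56.25<60$, so~\eqref{eq:feas} is infeasible. For $d=5$, $m=32$, the requirement is $5(57-32)=125$. With $q=3$ the bound is $64+60.75=124.75<125$, again infeasible. These two checks give Corollary-level content for $d=4,5$ directly, with no integrality needed.

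The main obstacle is that the real bound is not sharp enough in later dimensions. For example, at $d=6$, $m=56$ the requirement is $6(99-56)=258$, while $q=4$ only gives $\le 259.125$. Here I would exploit integrality of the $x_k$. The LP optimum of $\sum_k kx_k$ subject to $\sum_k x_k=m$ and $\sum_k x_k2^k=V$ is attained on two adjacent levels $k\in\{q,q+1\}$. Here that means $x_3+x_4=56$ and $8x_3+16x_4=1458$, which forces $x_4=126.25\notin\mathbb Z$. So the integer optimum lies strictly below the LP value. I would quantify the loss as follows. Any integer solution must deviate from the two-level support, and each unit moved to a level $k\notin\{q,q+1\}$ costs a fixed positive amount, namely the gap $(q-1)+2^{k-q}-k$, measured via the slack in Lemma~\ref{lem:convex}. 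Moreover, the congruences $\sum_k x_k2^k\equiv 2\cdot3^d\pmod{2^{q}}$ force a minimum amount of mass onto levels below $q$.

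Concretely, I would write $\sum_k kx_k=m(q-1)+2^{-q}V-\sum_k x_k\,g_q(k)$, where $g_q(k)=(q-1)+2^{k-q}-k\ge0$. I would then lower-bound the penalty $\sum_k x_kg_q(k)$ using the residue of $V$ modulo powers of $2$. In the $d=6$ case, the residue $1458\equiv 2\pmod 8$ forces some boxes with $k\le1$, each incurring penalty at least $g_4(1)=3-1+2^{-3}=2.125$... so one such box already drops the bound below $258$. For $d=7,8$ I would repeat this residue-plus-penalty bookkeeping at the threshold $m$. Alternatively, since the system has only $d+1\le 9$ variables with $x_k$ bounded by $V/2^k$, I would settle each threshold case by an explicit finite enumeration of integer profiles (the form the paper verifies in Lean). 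I would also exhibit a feasible profile at the claimed value to confirm that it is exactly the least feasible $m$.
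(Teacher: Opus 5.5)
Your proposal is correct and uses the same method as the paper: run the integer test~\eqref{eq:feas} with the bootstrapped $L_{d-1}$ and show it is infeasible below each stated value. The only difference is how the finite check is discharged. The paper hands it to a computer (\texttt{omega} in Lean for $d=4,5$, exact integer arithmetic through $d=8$). You instead give explicit certificates: Lemma~\ref{lem:convex} with a chosen $q$, plus a $2$-adic penalty where the real relaxation falls short. The paper's computation is uniform and machine-checked. Your certificates are short and readable, and they show that integrality is needed at exactly one point, $(d,m)=(6,56)$. For every other excluded $m$ the real bound alone suffices; at $d=6$ it already kills all $m\le 55$.

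\textbf{The $d=6$ slip.} With $q=4$ the two active levels are $\{4,5\}$, not $\{3,4\}$. The system $x_3+x_4=56$, $8x_3+16x_4=1458$ forces $x_3=-70.25$: fifty-six boxes of volume at most $16$ cannot reach $1458$, so that system is not the LP optimum at all. The actual optimum is $(x_4,x_5)=(20.875,35.125)$, with value $259.125$. Your certificate itself survives. Since $1458\equiv 2\pmod 4$, you get $x_0+x_1\ge 1$. Moreover $g_4(0)=3.0625$, $g_4(1)=2.125$, and $g_4\ge 0$ everywhere. Hence $\sum_k kx_k\le 3m+89<594-6m$ for every $m\le 56$. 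This bound is itself monotone in $m$, so your reduction to the threshold is justified here as well.

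\textbf{$d=7,8$ need no extra work.} The obstacle does not recur there, so the residue bookkeeping or enumeration you defer is unnecessary. With $m=96$ and $q=5$ you get $384+136.6875=520.6875<525=7(171-96)$. With $m=163$ and $q=6$ you get $815+205.03125=1020.03125<1024=8(291-163)$. In both cases the bound increases in $m$ while the slicing requirement decreases. So Lemma~\ref{lem:convex} alone closes these two dimensions.
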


\begin{proof}
Each bound is a finite integer-feasibility computation: one checks that the
system~\eqref{eq:feas} is infeasible for all $m$ below the stated value, bootstrapping
$L_{d-1}$ from the previous dimension ($L_3=11$, then $L_4=19$, and so on). These are decidable
arithmetic statements; they are formalized in Lean and discharged by \texttt{omega} in
Section~\ref{sec:lean} (for $d=4,5$) and verified by exact integer arithmetic for $d\le 8$.
\end{proof}

\begin{corollary}\label{cor:resolve}
$f(4)\ge 19>16=2^4$ and $f(5)\ge 33>32=2^5$. Question~\ref{q:main} holds for $d=4$ and $d=5$.
\end{corollary}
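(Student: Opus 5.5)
The corollary follows by combining Theorem~\ref{thm:slice} with the values $2^4=16$ and $2^5=32$. So the real work is to make sure the two slicing bounds $f(4)\ge 19$ and $f(5)\ge 33$ hold, and I would re-derive them by hand from the closed form after~\eqref{eq:feas}. The rule is: $m$ is impossible as soon as some integer $q$ satisfies
\[
d(3L_{d-1}-m) > m(q-1) + \frac{2\cdot 3^d}{2^q}.
\]
This single inequality is strictly stronger than needed, since it rules out $m$ without solving the integer program. Because the left side decreases and the right side increases in $m$, it suffices to check it at the largest excluded value, $m=f$-bound$-1$. Monotonicity then excludes every smaller $m$ as well. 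Any $m$ beyond that is already above $2^d$, so nothing more is needed.

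\textbf{The case $d=4$.} Take $L_3=11$ from Proposition~\ref{prop:base} and $m=18$. The left side is $4(33-18)=60$. For the right side, $2\cdot 3^4=162$, and
\begin{itemize}
\item $q=3$ gives $36+20.25=56.25<60$;
\item $q=2$ gives $18+40.5=58.5<60$.
\end{itemize}
So $m=18$ is impossible, and by monotonicity so is every $m\le 18$. Hence $f(4)\ge 19>16$.

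\textbf{The case $d=5$.} Bootstrap with $L_4=19$ and take $m=32$. The left side is $5(57-32)=125$. For the right side, $2\cdot 3^5=486$, and
\begin{itemize}
\item $q=2$ gives $32+121.5=153.5$, too large;
\item $q=3$ gives $64+60.75=124.75<125$.
\end{itemize}
So $m=32$ is excluded, and hence all $m\le 32$ are excluded. This gives $f(5)\ge 33>32$.

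The main obstacle is this $d=5$ step. The margin is only $0.25$, so every input must be exact. In particular, the bootstrap value $L_4=19$ is required: with $L_4=18$ the left side would be only $110$. It is also worth noting that one does not need the full bound $f(5)\ge 33$ from Theorem~\ref{thm:slice} to answer the question; it suffices to exclude $m\le 31$. At $m=31$ the margin is comfortable: the left side is $130$ and the $q=3$ right side is $62+60.75=122.75$. I would therefore present the $d=5$ conclusion through the $m\le 32$ computation above, and the $d=4$ conclusion through the $m\le 18$ computation, since that range is needed for the bootstrap anyway.
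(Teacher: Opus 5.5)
Your argument is correct and is essentially the paper's: the corollary follows from Theorem~\ref{thm:slice}, that is, from Lemma~\ref{lem:slice} combined with Lemma~\ref{lem:convex} and the volume identity, bootstrapping from $L_3=11$ to $L_4=19$. The only difference is presentational: you exhibit the explicit certificate $q=3$ by hand ($60>56.25$ excludes $m=18$, $125>124.75$ excludes $m=32$, and monotonicity in $m$ handles smaller $m$), whereas the paper checks infeasibility of the integer system~\eqref{eq:feas} by \texttt{omega}.
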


These are the first cases of Question~\ref{q:main} settled beyond the trivially known $d\le 3$.
The method does not settle $d\ge 6$: the ratio $f(d)/2^d$ from~\eqref{eq:feas} is
$1.19,1.03,0.89,0.76,0.64$ for $d=4,\dots,8$, dropping below~$1$ at $d=6$. To go further at
$d=6$ we need an inequality sensitive to more than the $k$-profile.

\section{Line rigidity: \texorpdfstring{$f(6)\ge 60$}{f(6) >= 60}}\label{sec:linerigidity}

The slicing bound at $d=6$ is $57$. We improve it to $60$ using a rigidity property of
\emph{axis-lines} that is invisible to profile statistics. We sketch the structure; the full
argument is formalized in Lean (Section~\ref{sec:lean}).

\subsection{Color-resolved lines}
For an axis-line $\ell\parallel i$ (a set of three points differing only in coordinate $i$), let
$p_v(\ell)$ be the number of covering boxes whose $i$-side equals the pair $[3]\setminus\{v\}$,
and $a_v(\ell)$ the number whose $i$-side is the singleton $\{v\}$. The one-dimensional
double-cover condition along $\ell$ gives, for each value $v$,
\[
  p_v(\ell)-a_v(\ell)=P(\ell)-2,\qquad P(\ell)\eqdef p_0(\ell)+p_1(\ell)+p_2(\ell).
\]
Call $\ell$ \emph{pair-only} if $(p_0,p_1,p_2,a_0,a_1,a_2)=(1,1,1,0,0,0)$, the unique
one-dimensional optimum. Let $T_i$ be the number of pair-only $i$-lines and
$C_i=\sum_{\ell\parallel i}(P(\ell)-2)$. Then $T_i\ge C_i$, and a direct count gives
$3\sum_i C_i=\sum_B(3k(B)-2d)2^{k(B)-1}$. At $d=6$, writing $H\eqdef\sum_i C_i$ in terms of the
$k$-profile,
\[
  H=-2x_0-3x_1-4x_2-4x_3+16x_5+64x_6 .
\]

\subsection{The crossing lemma}
The crux is a two-dimensional rigidity statement. For a coordinate $2$-plane $P$ (the points
varying in two fixed coordinates), let $R(P)$ and $S(P)$ be the numbers of pair-only rows and
pair-only columns of the induced two-dimensional double cover, $X(P)=R(P)\,S(P)$ the number of
``crossings,'' and $A(P)$ the number of box-traces that are singleton in both plane coordinates.

\begin{lemma}[Crossing rigidity]\label{lem:crossing}
For every exact double cover of $[3]^2$ by proper sub-boxes, $R\cdot S\le 2A$.
\end{lemma}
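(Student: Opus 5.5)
The plan is to reduce Lemma~\ref{lem:crossing} to a few parity and structural facts about double covers of $[3]^2$, and to fall back on a finite enumeration (as in the Lean formalization) for whatever case analysis remains. Throughout, the boxes are of three shapes: $2\times 2$ (four points), $2\times1$ or $1\times2$ (two points), and $1\times 1$ (one point). The number of $1\times1$ boxes is exactly $A$.

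\textbf{Step 1: parity of $A$.} Lemma~\ref{lem:volume} in dimension $2$ reads $4x_2+2x_1+x_0=18$, so $A=x_0$ is even. Since $R,S\in\{0,1,2,3\}$, the inequality $RS\le 2A$ then reduces to three claims:
\begin{itemize}
\item[(a)] if $R,S\ge1$ then $A\ge 2$, which by parity is the same as $A>0$ and covers $RS\le 4$;
\item[(b)] if $\{R,S\}=\{2,3\}$ then $A\ge 4$, or this configuration is impossible;
\item[(c)] $R=S=3$ is impossible.
\end{itemize}
Case (c) is immediate. If all three rows are pair-only, every box meeting a row has a two-element side in the first coordinate. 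Every box meets some row, so every box has a pair first side; symmetrically every box has a pair second side. Hence every box is $2\times2$, and $4x_2=18$ has no solution.

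\textbf{Step 2: local structure at a crossing.} Let row $y$ and column $x$ both be pair-only. The point $(x,y)$ lies in exactly two boxes. Each of them meets row $y$, so it has a pair first side, and meets column $x$, so it has a pair second side; thus both are $2\times2$. More generally, every box meeting a pair-only row has a pair first side, and every box meeting a pair-only column has a pair second side.

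Let $Y$ be the set of pair-only rows and $X$ the set of pair-only columns. Then every $1\times1$ box, and every box with a singleton first side, avoids $Y$. Likewise every box with a singleton second side avoids $X$. I would then apply the one-dimensional identity $p_v(\ell)-a_v(\ell)=P(\ell)-2$ along the remaining rows and columns, those outside $Y$ and $X$. The three pair-only rows each carry exactly the pairs $\{1,2\},\{0,2\},\{0,1\}$ once. Comparing this against the boxes that cover rows of $Y$ as well as rows outside $Y$ should force singletons to appear on the non-pair-only lines.

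Concretely, for (a), assume $A=0$. Every box is then $2\times2$, $2\times1$ or $1\times2$, and $4x_2+2x_1=18$ makes $x_1$ odd. I would aim to derive a contradiction from the column/row constraints, using that a point of $([3]\setminus X)\times([3]\setminus Y)$ must be covered using singleton sides in a way compatible with both families.

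\textbf{Step 3: the case $\{R,S\}=\{2,3\}$, expected to be the main obstacle.} Say $R=3$ and $S=2$. By the argument in Step 1, all boxes have a pair first side, so $A=0$. I therefore expect to show this configuration is impossible. The boxes are $2\times 2$ and $2\times1$. The single non-pair-only column then carries only the second-coordinate information of these boxes. Its one-dimensional double-cover condition, combined with the two pair-only columns forcing a rigid second-coordinate pattern on the $2\times2$ boxes, should contradict the volume count $4x_2+2x_1=18$.

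If the hand argument for (a) or (b) becomes messy, I would instead bound the multiplicities: each point is covered twice, so no box appears more than twice. That leaves a finite search over multisets of the $3^2\cdot$-many proper boxes, which is exactly what the formalization discharges. I would present the structural reductions above and invoke the exhaustive check only for the residual cases.
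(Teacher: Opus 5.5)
\textbf{There is a genuine gap: the central claim (a) has no proof.}

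Several parts of your proposal are correct: the Step~1 reduction, the parity of $A$ from $4x_2+2x_1+x_0=18$, your proof of case~(c), and the local observations of Step~2. Your own remark in Step~3 also helps. Since $R=3$ forces every box to have a pair first side, it gives $A=0$, and likewise $S=3$ gives $A=0$. Hence (b) and (c) follow immediately from (a).

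So the entire lemma is claim~(a): a pair-only row together with a pair-only column forces $A\ge 2$. For (a) you give no argument, only the intention to ``derive a contradiction from the column/row constraints.'' The principle you say you will use is not true in general. Consider the family
$\{0,2\}^2$, $\{0,1\}^2$, $\{1,2\}\times\{0\}$, $\{0\}\times\{1,2\}$, $\{1,2\}^2$, $\{1\}\times\{2\}$, $\{2\}\times\{1\}$.
It is a double cover of $[3]^2$ in which row $0$ and column $0$ are pair-only. Yet the point $(1,1)$ of the complementary block is covered by two $2\times2$ boxes and by no box with a singleton side.

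The brute-force fallback does not close the gap as written. You do not specify or run the search over multisets of the $36$ proper boxes. It is also not what the Lean development does: there the lemma is encoded by side-type multiplicities and reduced to two small \texttt{decide} cores, while the enumeration of all $84\,102$ covers is a separate, external computation.

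\textbf{The missing idea is a signed (checkerboard) weighting of the complementary $2\times2$ block.}
\begin{itemize}
\item Set-up. Put the crossing at $(x_0,y_0)$, and let $\{a,b\}=[3]\setminus\{x_0\}$ and $\{a',b'\}=[3]\setminus\{y_0\}$. Define $w(x,y)=\sigma_x\tau_y$ on $\{a,b\}\times\{a',b'\}$, with $\sigma_a=\tau_{a'}=1$ and $\sigma_b=\tau_{b'}=-1$, and extend $w$ by $0$ elsewhere.
\item Total weight. Exactness gives $\sum_B\sum_{p\in B}w(p)=2\sum_p w(p)=0$. Each box contributes the product of its two side sums.
\item Vanishing sides. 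A first side $\{x_0\}$ or $\{a,b\}$ sums to $0$, as does a second side $\{y_0\}$ or $\{a',b'\}$. So only sides that are singletons inside the block, or pairs through $x_0$ (respectively $y_0$), contribute.
\item Mixed boxes are excluded. A box whose first side is a singleton in $\{a,b\}$ and whose second side is a pair through $y_0$ meets the pair-only row $y_0$ with a singleton first side, which is forbidden. The symmetric case is excluded by the pair-only column $x_0$.
\item Pair--pair boxes contribute $\pm2$. The contributing pair--pair boxes are exactly the two boxes through $(x_0,y_0)$. The pair-only row and column force their first sides to be the two distinct pairs through $x_0$, and their second sides the two distinct pairs through $y_0$. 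Together they contribute $\pm2$.
\item Conclusion. The only remaining contributions are $1\times1$ boxes inside the block, each worth $\pm1$. At least two are needed to cancel the $\pm2$, so $A\ge2$.
\end{itemize}

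If you want to keep (b) as a separate case, your volume idea does work there once you add one observation. A box with a pair first side meets two of the three columns, hence at least one of the two pair-only columns. So every box is $2\times2$, and $4x_2=18$ has no solution.
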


\begin{proof}[Proof sketch]
Encode the cover by multiplicities $m(X,Y)\in\NN$ for side-types $X,Y\in\{E_0,E_1,E_2,P_0,P_1,
P_2\}$, where $E_i=\{i\}$ and $P_i=[3]\setminus\{i\}$; exactness reads $\sum_{X\ni x,\,Y\ni y}
m(X,Y)=2$ for all $x,y$. Two observations finish it. (i) A pair-only row forces, in each column
$u$, a unique active pair-type, and a pair-only column then forces these to be three distinct
pair-types inside a two-element set --- impossible; hence at most two columns are pair-only, so
$R\ge 1\Rightarrow S\le 2$ and symmetrically. (ii) If a crossing exists, weight the complementary
$\{1,2\}^2$ block by the checkerboard $w(x,y)=\sigma_x\tau_y$ ($\sigma_1=\tau_1=1$,
$\sigma_2=\tau_2=-1$); exactness makes the total weight $0$, the two pair-only lines annihilate
all mixed singleton/pair terms, and the surviving crossing matrix is a permutation matrix
contributing $\pm 2$, which only singleton--singleton traces ($\pm1$ each) can balance, forcing
$A\ge 2$. Combining, $R\cdot S=0$ or ($R,S\le 2$ and $A\ge 2$), so $R\cdot S\le 4\le 2A$.
\end{proof}

The encoding collapses the global statement to two finite checks --- a $27$-case bound on
pair-only columns and the $2$-case classification of $2\times2$ permutation matrices --- which
is why Lemma~\ref{lem:crossing} formalizes cleanly (both are settled by \texttt{decide} in Lean).
It is also confirmed by exhaustive enumeration: there are exactly $84\,102$ double covers of
$[3]^2$, and $R\cdot S\le 2A$ holds for all of them, with $9$ tight cases.

\subsection{Aggregation}
Let $q(x)$ be the number of pair-only lines through a point $x$, so $\sum_x q(x)=3T$ with
$T=\sum_i T_i$. Summing Lemma~\ref{lem:crossing} over all coordinate $2$-planes and reorganizing
the crossings point by point gives $\sum_x\binom{q(x)}{2}\le 2A_2$, where
$A_2=\sum_B\binom{6-k(B)}{2}2^{k(B)}$. With the convexity bound $\binom{q}{2}\ge 2q-3$ (valid for
$q\le 6$) and the line counts above, this aggregates to the single profile inequality
\begin{equation}\label{eq:aggregate}
  \sum_j\bigl(3k_j-12\bigr)2^{k_j}\ \le\ 2\sum_j\binom{6-k_j}{2}2^{k_j}\ +\ 2187 .
\end{equation}

\begin{theorem}\label{thm:linerigidity}
$f(6)\ge 60$.
\end{theorem}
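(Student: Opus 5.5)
The proof will be a finite integer-feasibility computation over the $k$-profile, exactly as in Theorem~\ref{thm:slice}, with the aggregated line-rigidity inequality~\eqref{eq:aggregate} added to the system~\eqref{eq:feas}. We fix $d=6$ and $L_5=33$ from Theorem~\ref{thm:slice}, and suppose a double cover of $[3]^6$ with $m\le 59$ boxes and profile $(x_0,\dots,x_6)$. We then collect the linear constraints the profile must satisfy. The basic ones are $\sum_k x_k=m$ and the volume identity $\sum_k 2^k x_k=1458$ (Lemma~\ref{lem:volume}). The slicing inequality $\sum_k kx_k\ge 6(99-m)$ comes from Lemma~\ref{lem:slice}. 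Finally, there is the rigidity inequality~\eqref{eq:aggregate}, which in profile form reads $\sum_k\bigl((3k-12)-2\binom{6-k}{2}\bigr)2^k x_k\le 2187$. The aim is to show that this integer system has no nonnegative solution for any $m\le 59$.

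Before the arithmetic, I will make sure~\eqref{eq:aggregate} is really derived as stated, since that is where all the content sits. The chain is as follows. Lemma~\ref{lem:crossing} is applied in each of the $\binom62\cdot 3^4$ coordinate $2$-planes. We then count crossings point by point: a point lying on $q(x)$ pair-only lines lies on $\binom{q(x)}{2}$ crossing pairs, each in exactly one plane. This gives $\sum_x\binom{q(x)}2\le 2\sum_P A(P)$. Next we identify $\sum_P A(P)$ with $A_2$. A box $B$ with $k$ pair sides meets a plane doubly-singleton exactly when both plane coordinates are singleton sides, and it does so over $2^k$ choices of the remaining coordinates. Then we apply $\binom q2\ge 2q-3$ for $0\le q\le 6$, which holds with equality at $q=2,3$, to get $2\cdot 3T-3\cdot 729\le 2A_2$. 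Finally we insert $T\ge\sum_iC_i=H$ and $3H=\sum_B(3k-12)2^{k-1}$. This yields $\sum_B(3k-12)2^k\le 2A_2+2187$, matching~\eqref{eq:aggregate}. The only nontrivial input is the one-dimensional identity $p_v-a_v=P-2$ and the bound $T_i\ge C_i$. The latter needs checking: a line with $P(\ell)-2\ge1$ is not automatically pair-only. I expect the correct statement to be that $P(\ell)\le 3$ always, with equality exactly for pair-only lines, since $P\ge 4$ forces some $a_v\ge 2$ and hence too much coverage. So each line contributes at most $1$ to $C_i$, and contributes $1$ only if it is pair-only.

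The main obstacle is then the finite infeasibility check, which I will carry out by hand-guided elimination before handing it to \texttt{omega}. I first eliminate $x_0$ and $x_1$ using the two equalities, and bound $x_6\le 1$ and $x_5$ by the volume identity. The slicing inequality forces mass onto large $k$. The rigidity inequality penalizes large $k$: its coefficient at $k=6$ is $6\cdot 64=384$, and at $k=5$ it is $3\cdot 32=96$, while it rewards small $k$ with negative coefficients. So I would try combining the slicing and rigidity constraints with suitable nonnegative multipliers, plus a tangent-line bound like Lemma~\ref{lem:convex}, to obtain a single inequality contradicting $m\le 59$. If no clean LP certificate exists because of integrality, I will fall back to enumerating the few cases $x_6\in\{0,1\}$ and the small range of $x_5$, checking each by the two remaining linear constraints; this is exactly the form \texttt{omega} discharges in Lean.
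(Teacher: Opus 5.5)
Your proposal is correct and follows the paper's route: you append the aggregated rigidity inequality~\eqref{eq:aggregate} to the $k$-profile system and certify infeasibility, and your rederivation of~\eqref{eq:aggregate} matches the paper's chain. That includes the point that $P(\ell)\le 3$, with equality exactly on pair-only lines; this is cleanest obtained by summing $p_v-a_v=P-2$ over $v$ to get $2P=6-\sum_v a_v$. The certificate you are after is just $8\times$(volume identity) minus~\eqref{eq:aggregate}: since $8\cdot 2^k-\bigl(3k-12-2\binom{6-k}{2}\bigr)2^k\le 160$ for all $0\le k\le 6$, it gives $160m\ge 8\cdot 1458-2187=9477$, so slicing is not needed. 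Note also that the volume identity only yields $x_6\le 22$, not $x_6\le 1$, so your fallback enumeration over $x_6\in\{0,1\}$ would not be justified as written.
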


\begin{proof}
Inequality~\eqref{eq:aggregate} together with the volume identity $\sum_j 2^{k_j}=1458$
constrains the $k$-profile of any double cover of $[3]^6$. A finite integer computation (a single
\texttt{omega} call after regrouping by $k$, formalized in Section~\ref{sec:lean}) shows the
combined system forces $m\ge 60$: explicitly, a nonnegative combination of~\eqref{eq:aggregate}
and the volume identity yields $160\,m\ge 9477$, hence $m\ge\lceil 9477/160\rceil=60$. Each of
the seven $k$-profiles surviving the volume, slicing and modular constraints at $m=57$ is killed
by~\eqref{eq:aggregate}.
\end{proof}

\section{The limits of profile methods}\label{sec:limits}

It is worth being precise about why Theorem~\ref{thm:linerigidity} needs the line structure and
cannot be reached by bookkeeping with the $k$-profile alone.

\begin{proposition}[Profile ceiling]\label{prop:ceiling}
Any lower bound for $f(6)$ that depends only on the $k$-profile $(x_0,\dots,x_6)$ through
aggregates $\sum_j F(k_j)$ --- in particular any bound from the volume identity, the slicing
inequality, or the ``point-exclusion'' generating-function family
$\sum_j(2+\lambda)^{k_j}(1+2\lambda)^{6-k_j}\le(2+(m-2)\lambda)3^6$ --- is at most $57$.
\end{proposition}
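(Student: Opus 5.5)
The plan is to exhibit a single ``fake'' $k$-profile at $m=57$ that satisfies every constraint of the form $\sum_j F(k_j)\le\Phi(m)$ (or with $\ge$) that any genuine double cover of $[3]^6$ would satisfy. We do not look for a real cover. Precisely, we show there is a nonnegative integer vector $(x_0,\dots,x_6)$ with $\sum_k x_k=57$ that arises as the $k$-profile of a \emph{formal} object. The object we use is a multiset of proper boxes covering every point exactly twice \emph{on average over the symmetry group}, i.e.\ a nonnegative rational combination of boxes. Any aggregate inequality that holds for every true double cover and is linear in $\sum_j F(k_j)$ also holds for such symmetrized fractional covers, because it is derived by averaging over points, lines or slices.

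Concretely, the first step is to make precise which class of bounds the statement covers. We will read it as the class of inequalities derived, like Lemmas~\ref{lem:volume}, \ref{lem:slice} and the point-exclusion family, by summing a local count over points or slices. Each such count depends on a box only through $k(B)$, after averaging over the hyperoctahedral-type symmetry group $G=S_3\wr S_6$ acting on $[3]^6$. The second step is the averaging argument. If $\bbox$ is a double cover, then so is $g\bbox$ for every $g\in G$. Hence any inequality valid for all double covers and expressed through $\sum_j F(k_j)$ is also valid for the uniform average $\frac1{|G|}\sum_g g\bbox$. That average is a $G$-invariant fractional double cover whose $k$-profile equals the original one. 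Conversely, any $G$-invariant fractional double cover with integer profile passes every such test. So it suffices to produce a $G$-invariant fractional double cover with $57$ boxes.

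The third step is the construction. Take $x_k$ boxes of each type $k$, spread uniformly over all boxes with $k(B)=k$. A uniformly weighted type-$k$ class covers each point with density $2^k/3^6$ per box, since every point is equivalent under $G$. Therefore the exact double-cover condition at every point reduces to the volume identity $\sum_k x_k2^k=1458$. The only remaining task is to find an integer profile with $\sum x_k=57$ and $\sum x_k 2^k=1458$, which is a routine search. We will cross-check that this profile meets the slicing inequality, and it must, since by the same symmetry argument the slicing inequality reads off uniform covers. We expect this step to agree with the paper's claim that seven profiles survive at $m=57$.

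The main obstacle is the first step. The phrase ``any bound depending only on the $k$-profile'' must be pinned down so that the averaging argument applies. A bound whose constraints are valid only for \emph{integral} covers would escape the argument; inequality~\eqref{eq:aggregate} is one example, since it uses the integrality of the planar covers in Lemma~\ref{lem:crossing}. For this reason we will state the ceiling for bounds valid over $G$-invariant fractional double covers. Volume, slicing and point-exclusion all qualify: their proofs use only exact coverage counts summed with nonnegative weights, and the slicing proof's appeal to $f(d-1)$ we replace by its fractional analogue through the same recursion. The lower end, that $57$ is attained as a bound, is Theorem~\ref{thm:slice}.
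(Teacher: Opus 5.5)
There is a genuine gap. Your averaging step runs in the wrong direction, and the failure falls on the one family that produces the number $57$.

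Symmetrizing a genuine double cover shows that every integral profile is the profile of a $G$-invariant fractional cover. So inequalities valid on fractional covers are valid on integral ones, but not conversely. Your claim that ``any $G$-invariant fractional double cover with integer profile passes every such test'' is therefore unsupported. You see this for \eqref{eq:aggregate}, but the slicing inequality of Lemma~\ref{lem:slice} with $L_5=33$ has the same defect: the value $33$ is bootstrapped from $f(3)\ge 11$, an integrality fact from exhaustive search.

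Concretely, $(x_0,\dots,x_6)=(34,0,0,0,1,0,22)$ has $m=57$ and $\sum_k 2^kx_k=1458$. Your construction realizes it as a $G$-invariant fractional cover, yet $\sum_k kx_k=136<252=6(3\cdot 33-57)$. Likewise $(0,1,0,0,1,1,22)$ is realized with $m=25$. Since volume plus slicing forces $m\ge 57$ (Theorem~\ref{thm:slice}), the slicing inequality fails on your class, whose ceiling is in fact $25$.

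Replacing $f(5)$ by a ``fractional analogue'' does not help. That analogue is at most $17$ (take the profile $x_1=x_2=1$, $x_5=15$ on $[3]^5$), which turns slicing into an inequality that is vacuous at $m=57$. So the statement's ``in particular \dots\ the slicing inequality'' is no longer what you prove. The first example also shows that your assertion that a volume-only search ``must'' return a profile meeting slicing is false.

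What is missing is an explicit profile at $m=57$ checked directly against slicing, and that check is the paper's proof. The profile $(0,1,0,0,21,35,0)$ has $\sum_k 2^kx_k=2+336+1120=1458$ and $\sum_k kx_k=1+84+175=260\ge 252$.

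Your symmetrization idea is then a nice way to handle the remaining family. For \emph{any} nonnegative profile satisfying the volume identity, each point of your symmetrized cover carries weight exactly $2$ at distance $0$ and weight $m-2$ at distance $\ge 1$. Since $\prod_i\bigl(|S_i|+\lambda(3-|S_i|)\bigr)=\sum_x\lambda^{\#\{i:x_i\notin S_i\}}$, the point-exclusion inequality automatically holds for $\lambda\in[0,1]$ and reverses for $\lambda\ge 1$. This is cleaner than the paper's factorization of the difference as $3\lambda(\lambda-1)$ times a positive polynomial.

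Finally, the paper's ``seven profiles'' are those surviving volume, slicing \emph{and} the modular constraint. Volume alone admits many more, so your expectation of matching that count is also off.
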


\begin{proof}[Proof idea]
The formal profile $(x_0,\dots,x_6)=(0,1,0,0,21,35,0)$ with $m=57$ satisfies every such
constraint exactly: the volume identity ($\sum 2^{k}x_k=1458$), the slicing aggregate
($\sum k\,x_k=260\ge 252$), and the generating-function inequalities for all $\lambda\ge 0$ (the
difference of the two sides factors as $3\lambda(\lambda-1)$ times a polynomial that is positive
for $\lambda\ge 0$, so the sign is correct on both sides of $\lambda=1$). No aggregate
$\sum_j F(k_j)$ can separate this profile from a genuine cover, so no such bound exceeds $57$.
\end{proof}

Thus Theorem~\ref{thm:linerigidity} is the first bound here to use coordinate-level structure
rather than the profile. Two further remarks delimit the method.

\emph{The line-rigidity bound caps at exactly $60$.} Minimizing $m$ over the relaxation
$\{$volume identity, $A_2\ge 3H-\tfrac{3^{d+1}}2$, $x\ge 0\}$ has optimum
$\approx 59.24$ at $d=6$, so $60$ is the exact profile-level ceiling of the line-rigidity
invariant; we exhibit an explicit feasibility witness (a $k$-profile together with a plane
assignment realizing zero aggregate defect) showing $m=60$ cannot be forced up to $61$ by this
family. Breaking $60$ requires a new inequality controlling the geometric arrangement of the
singleton-trace mass across planes, not a refinement of the present bookkeeping.

\emph{Other modular and algebraic angles do not beat $57$.} A codimension-$2$ analogue of the
modular argument, an $\mathbb{F}_4$-Fourier/Krawtchouk parity condition, a polynomial
(coordinate-degree) bound, and a Graham--Pollak-type rank bound all yield ceilings at or below
the profile barrier $57$ at $d=6$. Each is a legitimate necessary condition --- the codimension-$2$
modular condition, for instance, already kills the profile of Proposition~\ref{prop:ceiling} ---
but none moves the numerical bound. Line rigidity is, so far, the only angle that does.

\section{Upper bounds: the leapfrog lift}\label{sec:upper}

We turn to constructions. The PatternBoost bound $f(d)\le\tfrac{41}{32}2^d$ is obtained by
tiling a fixed cover; it does not lift across dimensions. We give a genuine dimension-lifting
recursion.

\begin{theorem}[Leapfrog lift]\label{thm:leapfrog}
For all $r\ge 0$, any double cover of $[3]^r$ with $N$ boxes yields a double cover of
$[3]^{r+3}$ with $6\cdot 2^r+3N$ boxes. Hence
\[
  f(r+3)\le 6\cdot 2^r+3f(r).
\]
\end{theorem}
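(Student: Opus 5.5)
The plan is to build the cover of $[3]^{r+3}=[3]^3\times[3]^r$ from product boxes $Q\times R$, where $Q$ is a proper sub-box of $[3]^3$ and $R$ is a proper sub-box of $[3]^r$; any such product is automatically a proper sub-box. The multiplicity of a point $(q,y)$ in a family of products $Q_a\times R_a$ is $\sum_{a:\,q\in Q_a}\#\{a: y\in R_a\}$. I will therefore split $[3]^3$ into two regions and treat them separately.

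\textbf{Region 1: the diagonal.} Let $D=\{(v,v,v):v\in[3]\}$ be the three diagonal points of $[3]^3$. Fix a double cover $\{B_1,\dots,B_N\}$ of $[3]^r$ and take the $3N$ boxes $\{(v,v,v)\}\times B_j$. Each $\{(v,v,v)\}$ has singleton sides, so these boxes are proper. They cover every point of $\{(v,v,v)\}\times[3]^r$ exactly twice and cover nothing off $D\times[3]^r$.

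\textbf{Region 2: the complement.} Let $U=[3]^3\setminus D$. I need proper boxes that double-cover $U$ while missing $D$ entirely. For a permutation $\sigma$ of $[3]$, write $P_\sigma=\bigl([3]\setminus\{\sigma(1)\}\bigr)\times\bigl([3]\setminus\{\sigma(2)\}\bigr)\times\bigl([3]\setminus\{\sigma(3)\}\bigr)$. A point $x\in[3]^3$ lies in $P_\sigma$ if and only if $x_i\ne\sigma(i)$ for all $i$. I then check the six boxes $P_\sigma$ point type by point type.
\begin{itemize}
\item A diagonal point $(v,v,v)$ is excluded by every $\sigma$, since $\sigma$ takes the value $v$ somewhere. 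So it has multiplicity $0$.
\item A point with distinct coordinates, say $(0,1,2)$ up to symmetry, is covered by the $\sigma$ that avoid $x$ in every position. These are the derangements of $3$ elements, and there are $2$ of them.
\item A point with exactly two equal coordinates, say $(0,0,1)$ up to symmetry, needs $\sigma(1),\sigma(2)\ne 0$. This forces $\sigma(3)=0$, which automatically satisfies $\sigma(3)\ne 1$. The values $\sigma(1),\sigma(2)$ are then $1,2$ in either order, giving $2$ choices.
\end{itemize}
Hence the six boxes $P_\sigma$ cover $U$ exactly twice and miss $D$. As a consistency check, the volume count gives $6\cdot 8=48=2\cdot 24=2|U|$.

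\textbf{Lifting Region 2 to $[3]^r$.} The factors $P_\sigma$ contribute multiplicity $2$ on $U$, so in the last $r$ coordinates I need a family covering each point exactly once, not a double cover. I use the $2^r$ boxes $\prod_{i}E_i$ with each $E_i\in\{\{0\},\{1,2\}\}$. These are proper and partition $[3]^r$. Taking all products $P_\sigma\times\prod_i E_i$ gives $6\cdot 2^r$ boxes, which cover $U\times[3]^r$ exactly twice and $D\times[3]^r$ not at all.

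Adding the two families gives multiplicity $2$ everywhere on $[3]^{r+3}$, using $6\cdot 2^r+3N$ boxes. Taking $N=f(r)$ yields $f(r+3)\le 6\cdot 2^r+3f(r)$; for $r=0$ the cover of $[3]^0$ is the single point taken twice, so $N=2$.

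The only nontrivial step is finding the six-box double cover of $[3]^3\setminus D$ that avoids $D$; once the permutation family is guessed, the remaining verifications are routine multiplicity counts.
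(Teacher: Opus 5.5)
Your proposal is correct and is the paper's construction in different notation: read the paper's head and tail coordinates $(x_1,x_2,z)$ as your $[3]^3$ and swap the values $0,1$ in $x_1$. Under this relabeling its six shared boxes $B\times Z$ ($B\in G_Z$) become exactly your six $P_\sigma$, its hole points $(1,0,0),(0,1,1),(2,2,2)$ (filled by $H_a\times Q\times\{a\}$) become your diagonal $D$, and its middle partition $\{\{0,1\},\{2\}\}^r$ is your $\{\{0\},\{1,2\}\}^r$. The only difference is the verification: the paper slices by the tail value $a$ and recognizes the optimal six-box double cover of $[3]^2$ in each slice, whereas you count derangements directly in $[3]^3$, which is an equally valid and more symmetric check.
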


\begin{proof}
Write the new coordinates as $(x_1,x_2,\langle r\text{ middle}\rangle,z)$: two ``head''
coordinates $x_1,x_2$, the middle block carrying $[3]^r$, and a single ``tail'' coordinate $z$.
Group six head pair-boxes (each a product of two two-element sides) into three pairs, indexed by a
\emph{tail-side} $Z\in\bigl\{\{0,1\},\{0,2\},\{1,2\}\bigr\}$:
\[
\begin{aligned}
  G_{\{0,1\}}&=\bigl\{\,(\overline 1,\overline 1),\ (\overline 0,\overline 0)\,\bigr\}, &
  G_{\{0,2\}}&=\bigl\{\,(\overline 2,\overline 0),\ (\overline 1,\overline 2)\,\bigr\}, &
  G_{\{1,2\}}&=\bigl\{\,(\overline 2,\overline 1),\ (\overline 0,\overline 2)\,\bigr\},
\end{aligned}
\]
where $\overline w\eqdef[3]\setminus\{w\}$ denotes a two-element side and $(\,\overline u,
\overline v\,)$ the head pair-box $\overline u\times\overline v$. The three singleton head-boxes
are $H_0=\{1\}\times\{0\}$, $H_1=\{0\}\times\{1\}$, $H_2=\{2\}\times\{2\}$.

\emph{Shared boxes ($6\cdot 2^r$).} Partition $[3]^r$ into the $2^r$ cells $T$ of the product
partition $\{\{0,1\},\{2\}\}^r$. For each cell $T$, each tail-side $Z$, and each head-box
$B\in G_Z$, include $B\times T\times Z$.

\emph{Private boxes ($3N$).} For each box $Q$ of the given cover $\bbox_r$ of $[3]^r$ and each
value $a\in[3]$, include $H_a\times Q\times\{a\}$.

Fix a tail value $a$ and a middle point $y$, lying in the cell $T\ni y$. Exactly two of the three
tail-sides contain $a$, so the active shared boxes contribute the four head pair-boxes
$\bigcup_{Z\ni a}G_Z$; and since $\bbox_r$ is a double cover, $y$ lies in exactly two of its boxes
$Q$, contributing two copies of the singleton $H_a$. For each $a$ these four pair-boxes together
with the doubled singleton form the optimal six-box double cover of the head $[3]^2$, so every
point $(x_1,x_2,y,a)$ is covered exactly twice. All sides are proper, and the box count is
$6\cdot 2^r+3N$.
\end{proof}

\begin{corollary}\label{cor:leapfrog-values}
Taking the $11$-box cover of $[3]^3$ gives $f(6)\le 81$, improving the previously best known
$82$~\cite{PatternBoost}. Iterating the recursion gives $f(d)\le(\tfrac65+o(1))2^d$.
\end{corollary}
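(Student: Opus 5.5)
Both assertions follow from Theorem~\ref{thm:leapfrog}. The first is a direct substitution; the second requires solving the recursion along the residue classes of $d$ modulo $3$.

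\emph{The value $f(6)\le 81$.} Take $r=3$ and let $N=11$ be the size of the cover of $[3]^3$ from Proposition~\ref{prop:base}. Theorem~\ref{thm:leapfrog} then gives
\[
f(6)\le 6\cdot 2^3+3\cdot 11=48+33=81.
\]

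\emph{The asymptotic bound.} Normalise by setting $g(d)\eqdef f(d)/2^d$. Dividing the recursion by $2^{r+3}$ gives
\[
g(r+3)\le \tfrac{6}{8}+\tfrac{3}{8}\,g(r).
\]
The affine map $t\mapsto \tfrac34+\tfrac38 t$ has fixed point $t^*$ satisfying $t^*=\tfrac34+\tfrac38 t^*$, so $t^*=\tfrac{6}{5}$. Subtracting the fixed-point equation yields
\[
g(r+3)-\tfrac65\le \tfrac38\bigl(g(r)-\tfrac65\bigr).
\]

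Fix a residue class of $d$ modulo $3$ and a base dimension $r_0\in\{1,2,3\}$ in that class. Iterating $t$ times gives
\[
g(r_0+3t)\le \tfrac65+\left(\tfrac38\right)^t\left|g(r_0)-\tfrac65\right|.
\]
The iteration is legitimate because the right-hand side of the recursion is increasing in $f(r)$, so any upper bound on $f(r)$ propagates. As base values use any finite covers, for example the trivial cover by two copies of a partition of $[3]^{r_0}$ into proper sub-boxes; only finiteness of $g(r_0)$ matters.

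Since $(3/8)^t\to 0$ in each of the three classes, and there are only three classes, we obtain $g(d)\le \tfrac65+o(1)$, that is, $f(d)\le(\tfrac65+o(1))2^d$.

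There is no real obstacle; the only point needing care is that the iteration is done separately in each residue class mod $3$, with a finite base value in each class.
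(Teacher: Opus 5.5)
Your proposal is correct and follows the paper's proof: it uses the same substitution $r=3$, $N=11$, and the same normalization $g(d)=f(d)/2^d$ with the contraction $t\mapsto\tfrac34+\tfrac38 t$ toward its fixed point $\tfrac65$. You also make explicit what the paper leaves implicit, namely the iteration along each residue class of $d$ modulo $3$ from a finite base value (e.g.\ two copies of a product partition), which is exactly what justifies $\limsup_d g(d)\le\tfrac65$.
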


\begin{proof}
$f(6)\le 6\cdot 2^3+3\cdot 11=48+33=81$. Writing $g(d)=f(d)/2^d$, the recursion reads
$g(r+3)\le \tfrac34+\tfrac38 g(r)$, whose fixed point is $g^\ast=\tfrac65$; since the map is a
contraction, $\limsup_d g(d)\le \tfrac65$.
\end{proof}

The constant $\tfrac65=1.2$ improves the PatternBoost constant $\tfrac{41}{32}=1.28125$ at every
$d\ge 6$ ($81<82$, $159<164$, $315<328$, $\dots$) and asymptotically; it is partial progress on
their explicit problem of reducing $1.28$ (their Theorem~4.2 and the remark following it).
Pushing the constant below~$1$, which they note would resolve Question~\ref{q:main}, remains open.

\begin{theorem}[Four-step lift]\label{thm:fourstep}
For all $r\ge 0$, $f(r+4)\le 8\cdot 2^r+9f(r)$. Hence $g(r+4)\le\tfrac12+\tfrac{9}{16}g(r)$, with
fixed point $g^\ast=\tfrac87$, so $f(d)\le(\tfrac87+o(1))2^d$; for example $f(9)\le 625$.
\end{theorem}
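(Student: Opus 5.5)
The plan is to copy the architecture of the proof of Theorem~\ref{thm:leapfrog}, widening the head from two coordinates to three. Write the coordinates of $[3]^{r+4}$ as $(x_1,x_2,x_3,\langle r\text{ middle}\rangle,z)$. As before, partition $[3]^r$ into the $2^r$ cells $T$ of $\{\{0,1\},\{2\}\}^r$. I would look for two pieces of data:
\begin{itemize}
\item a family of \emph{shared} head boxes in $[3]^3$, split as $G_{\{0,1\}}\sqcup G_{\{0,2\}}\sqcup G_{\{1,2\}}$ with $\sum_Z|G_Z|=8$, used as $B\times T\times Z$;
\item for each tail value $a\in[3]$, three \emph{private} head boxes $H_{a,1},H_{a,2},H_{a,3}$, used as $H_{a,t}\times Q\times\{a\}$ for every box $Q$ of a given cover $\bbox_r$ of $[3]^r$.
\end{itemize}
The box count is then $8\cdot 2^r+9N$, as required. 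The recursion $g(r+4)\le\tfrac12+\tfrac9{16}g(r)$ and its fixed point $\tfrac87$ follow exactly as in Corollary~\ref{cor:leapfrog-values}. The value $f(9)\le 8\cdot 32+9f(5)\le 256+369=625$ uses $f(5)\le 41$.

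Next I reduce the verification to a finite condition on the head gadget, exactly as in the leapfrog proof. Fix a tail value $a$ and a middle point $y$. The active shared boxes are $\bigcup_{Z\ni a}G_Z$, independently of the cell containing $y$. The point $y$ lies in exactly two boxes $Q$ of $\bbox_r$, so each $H_{a,t}$ is active with multiplicity exactly $2$. So the construction is a double cover if and only if, for each $a$, the multiset
\[
  \bigcup_{Z\ni a}G_Z\ \uplus\ 2\cdot\{H_{a,1},H_{a,2},H_{a,3}\}
\]
double covers $[3]^3$. Concretely, the shared boxes active at $a$ must cover the complement of $R_a\eqdef H_{a,1}\cup H_{a,2}\cup H_{a,3}$ exactly twice and miss $R_a$, while the $H_{a,t}$ partition $R_a$. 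Properness of all sides is automatic, since $T$ has sides $\{0,1\}$ or $\{2\}$, each $Z$ is a pair, and $\{a\}$ is a singleton.

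The volume identity (Lemma~\ref{lem:volume}) gives a useful necessary condition: for each $a$, $\sum_{Z\ni a}\sum_{B\in G_Z}|B|+2|R_a|=54$. For example, if every shared box is a full pair-box of volume $8$ and the $G_Z$ have sizes $(3,3,2)$, then the two tail values seeing six active shared boxes need $|R_a|=3$, so three singletons. The remaining tail value sees five active shared boxes and needs $|R_a|=7$, which is still possible with boxes of sizes $4+2+1$. Mixed shapes in the shared boxes are allowed too, so I would not insist on pair-boxes. A natural symmetric guess is to take the leapfrog head gadget $G_Z$ and $H_a$ and lift it with one extra coordinate, distributing the pair values of that coordinate cyclically as in the $(\overline u,\overline v)$ table, so that the $S_3$ symmetry permuting tail values is preserved.

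The main obstacle is actually exhibiting the gadget: eight shared head boxes and nine private ones satisfying the three exactness conditions simultaneously. This is a finite problem over the proper sub-boxes of $[3]^3$, with $26$ per-point equations per tail value. I would first try the cyclic-symmetric ansatz by hand, guided by the known $11$-box cover of $[3]^3$: its structure, with many large boxes and a few small ones, is suggestive of the shared-plus-private split. If that fails, I would solve the small integer program directly and then present the resulting explicit table. The per-$a$ exactness can then be verified point by point, or by a \texttt{decide} call as for the other finite checks in the paper.
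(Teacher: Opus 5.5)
\textbf{There is a genuine gap.} Your reduction to a per-tail-value condition is correct, but the proposal stops exactly where the theorem's content begins: no gadget is exhibited. Worse, no gadget of the shape you prescribe exists.

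\emph{Why the shape is impossible.} For each tail value $a$, your condition makes the $s_a$ active shared head boxes, together with $H_{a,1},H_{a,2},H_{a,3}$ each taken twice, a double cover of $[3]^3$ by $s_a+6$ boxes. So $s_a\ge 5$ by $f(3)=11$. Every shared box is active at two tail values, so $\sum_a s_a=16$, which forces $(s_a)=(6,5,5)$ up to order. (Your volume example slips here: sizes $(3,3,2)$ give active counts $6,5,5$, not $6,6,5$.)

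A slice with $s_a=5$ is a self-contained gadget: five proper boxes double-covering $U_a$ and three partitioning $R_a$. On its own it lifts, with $r=1$ and the cover $\{\overline 0,\overline 1,\overline 2\}$ of $[3]^1$, to a $19$-box double cover of $[3]^4$. Let $\beta_i$ and $\mu_i$ count the shared, resp.\ private, head boxes with a two-element $i$-side. In the lifted cover, head coordinate $i$ carries $2\beta_i+3\mu_i$ two-element sides, whereas the proof of Lemma~\ref{lem:slice} with $f(3)=11$ demands at least $3\cdot 11-19=14$.

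If the five shared boxes are all-pair, then $|R_a|=27-20=7$ and the private boxes have sizes $4,2,1$. Hence $\sum_i\mu_i=3$, and some coordinate has $10+3\mu_i\le 13$, a contradiction.

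With mixed shapes, the volume identity $\sum|B|+2\sum_t|H_{a,t}|=54$ together with $\sum_i(2\beta_i+3\mu_i)\ge 42$ leaves only shared $k$-values $(3,3,3,2,1)$ and three $4$-point private boxes, with equality throughout. Equality forces $\mu_i=2$ for each $i$, so the singleton sides $\{a_1\},\{a_2\},\{a_3\}$ of the private boxes lie in distinct coordinates. Two pairs in $[3]$ always meet, so an all-pair box avoiding all three private boxes must be $A_{(a_1,a_2,a_3)}$. Then the three all-pair shared boxes coincide and cover that box three times.

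Uneven splits $c_a$ of the nine private boxes among tail values also fail. They violate the counts $s_a+2c_a\ge 11$ and $2s_a+3c_a\ge 19$, together with ABHK for a slice containing only two shared boxes.

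\emph{What the paper does instead.} The missing idea is that the masks must be genuinely four-dimensional. The paper takes an all-pair skeleton $T\subset[3]^4$ of $8$ words with $N(x)\in\{0,2\}$. Its $17$-point hole set is partitioned by $9$ proper boxes of $[3]^4$, and the whole thing lifts by $A_t\times(\text{cell})$ and $M\times Q$ as in Theorem~\ref{thm:leapfrog}. No coordinate plays the role of a tail. By the argument above, every coordinate must have some mask with a two-element side there.

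For instance, take $T=\{0120,1200,0210,2011,1021,2101,0022,2202\}$. Its hole set is $\{000,222\}\times[3]\,\cup\,\{100,200,201,202,212\}\times\{0\}\,\cup\,\{010,020,021,022,122\}\times\{1\}\,\cup\,\{111\}\times\{2\}$. This is partitioned by nine masks: $\{000\}\times\{0,2\}$, $\{222\}\times\{1,2\}$, $\{111\}\times\{2\}$, the two-point boxes $\{100,200\}$, $\{201,202\}$, $\{212,222\}$ with last coordinate $0$, and $\{000,010\}$, $\{020,021\}$, $\{022,122\}$ with last coordinate $1$. The first two masks, with a two-element last side, are exactly what your framework excludes. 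With them in place, the count $8\cdot 2^r+9N$, the recursion, $g^\ast=\tfrac87$ and $f(9)\le 625$ follow as you wrote.
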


\begin{proof}[Proof sketch]
The skeleton is an explicit $8$-element word-set $T\subset[3]^4$ whose all-pair boxes
$A_t=\prod_i([3]\setminus\{t_i\})$ cover $[3]^4$ twice except on a $17$-point hole-set, which is
partitioned by $9$ explicit proper masks; lifting by the same $\{\{0,1\},\{2\}\}^r$ middle
partition (shared) and $\bbox_r$ (private) as in Theorem~\ref{thm:leapfrog} gives a valid double
cover of $[3]^{r+4}$ with $8\cdot 2^r+9f(r)$ boxes. The hole-count and disjointness are finite
checks. The bound $f(9)\le 625$ improves the leapfrog value $627$.
\end{proof}

\begin{remark}\label{rem:conj}
The proved and best-known small values $f(2)=6,f(3)=11,f(4)\le 21,f(5)\le 41$ all equal
$5\cdot 2^{d-2}+1$, suggesting this as a closed form. It fails from $d=7$: the leapfrog gives
$f(7)\le 6\cdot 2^4+3\cdot 21=159<161=5\cdot 2^5+1$, and asymptotically $f(d)\le 1.2\cdot 2^d$
beats $1.25\cdot 2^d$. The closed form remains the best known value for $d\le 6$.
\end{remark}

\section{The construction frontier and its relation to Question~\ref{q:main}}\label{sec:wall}

Both lifts above share a shape: a set of $S$ all-pair ``skeleton'' boxes that double-cover
$[3]^j$ off a hole-set $H$, together with $c$ proper ``mask'' boxes partitioning $H$. We record a
numerical regularity and explain why understanding it in full is equivalent to
Question~\ref{q:main}.

Call such a configuration a \emph{$(j,S,c)$ skeleton construction}: a word-set $T\subseteq[3]^j$
with $|T|=S$ whose all-pair boxes $A_t=\prod_i([3]\setminus\{t_i\})$ satisfy
$N(x)\eqdef\#\{t\in T: x\in A_t\}\in\{0,2\}$ for all $x$ (so $U=\{N=2\}$ is double-covered and
$H=\{N=0\}$ is the hole-set), together with $c$ proper boxes partitioning $H$. Its lift via
Theorem~\ref{thm:leapfrog}'s mechanism has asymptotic constant $g^\ast=S/(2^j-c)$.

\begin{theorem}[Wall, partial]\label{thm:wall}
Every $(j,S,c)$ skeleton construction we have examined satisfies $S+c=2^j+1$. Moreover
$S+c\ge 2^j+1$ holds in each of the following cases:
\begin{enumerate}
\item the skeleton intersection graph is bipartite;
\item $S\le 4$;
\item $|E(G)|\le S$, where $G$ is the box-intersection graph;
\item $j=3$ and the skeleton is all-pair (verified exhaustively: all $279$ valid $j=3$ all-pair
skeletons are wall-sitters, with no exceptions).
\end{enumerate}
\end{theorem}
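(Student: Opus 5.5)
The plan is to reduce every case to the Alon--Bohman--Holzman--Kleitman partition theorem: from the skeleton and the masks, exhibit an honest \emph{partition} of $[3]^j$ into proper sub-boxes that uses at most $S+c-1$ boxes, so that ABHK yields $S+c-1\ge 2^j$. Statement~(4) is the exhaustive computation quoted in the paper, a finite check over all word-sets $T\subseteq[3]^3$; I would not reprove it. I would also dispose of $S=0$ separately: there $H=[3]^j$, so the masks alone partition $[3]^j$ and $c\ge 2^j$ by ABHK.

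First I would record a structural observation. For two words $t,s$, every coordinate factor of $A_t\cap A_s$ is the intersection of two two-element subsets of $[3]$, which is nonempty. Hence any two skeleton boxes intersect, and the box-intersection graph is $G=K_S$. Consequently case~(1) is exactly $S\le 2$, and case~(3), $\binom S2\le S$, is exactly $S\le 3$. Everything therefore collapses to case~(2), $S\le 4$. I would still write out the bipartite argument in its general form, since it is the template for the others.

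\emph{Bipartite template.} Let $X\sqcup Y$ be a bipartition of the skeleton boxes, so boxes within a class are pairwise disjoint. A point of $U$ lies in exactly two skeleton boxes, and these intersect, so they lie in different classes. Thus $X$ partitions $U$, and $X$ together with the $c$ masks partitions $[3]^j$. ABHK gives $|X|+c\ge 2^j$. Hence $S+c=|X|+|Y|+c\ge 2^j+|Y|\ge 2^j+1$, because $Y\neq\varnothing$ when $S\ge 1$; and $S=1$ cannot occur, since then $N$ would take the value $1$.

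\emph{$S=3$.} Since $N\le 2$, the triple intersection is empty. Then $A_1=(A_1\cap A_2)\sqcup(A_1\cap A_3)$ and $U=A_1\sqcup(A_2\cap A_3)$, and $A_2\cap A_3$ is a proper box (a product of proper, nonempty sides). So $\{A_1,\,A_2\cap A_3\}$ together with the masks partitions $[3]^j$. This gives $2+c\ge 2^j$, which is exactly $S+c\ge 2^j+1$.

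\emph{$S=4$: the main obstacle.} Again each point of $U$ lies in exactly one pairwise intersection $A_s\cap A_t$, and these six intersections are disjoint. The naive partition $A_1,\,A_2\cap A_3,\,A_2\cap A_4,\,A_3\cap A_4$ plus the masks only yields $4+c\ge 2^j$, one short. The task is to save one box by showing that some union of two of these pieces is itself a box, for example $(A_2\cap A_4)\cup(A_3\cap A_4)=A_4\setminus A_1$. The condition $N\in\{0,2\}$ forces strong coordinatewise constraints on four words. I would first try to show that two of the words $t_2,t_3,t_4$ can be chosen so that $A_4\setminus A_1$ is a box: $A_4\cap A_1^{c}$ is a union of slabs, one for each coordinate where $t_1$ differs from the relevant letter, and it is a box exactly when only one such coordinate contributes. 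If this fails, the fallback is to strengthen ABHK itself. A size-$2^j$ partition attaining equality in the parity count must have every odd box $C$ meeting an odd number of parts in exactly one point, with each part counted exactly $2^j$ times. I would test whether the specific four-intersection partition can satisfy this, using the explicit checkerboard weighting from the proof of Lemma~\ref{lem:crossing} to produce a parity contradiction on some odd $C$.
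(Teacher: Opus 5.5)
Your reduction matches the paper's: partition $U$ into at most $S-1$ proper boxes, then apply Alon--Bohman--Holzman--Kleitman to that partition together with the masks. Your observation that $A_t\cap A_s=\prod_i([3]\setminus\{t_i,s_i\})\neq\varnothing$ is correct. It gives $G=K_S$, so cases (1) and (3) are only $S\le 2$ and $S\le 3$. Your $S=3$ argument is also correct.

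\textbf{The gap is $S=4$.} After your reduction this is the only substantive case, and the proposal does not prove it. You name the right target: merge two of the triangle pieces into one box, such as $A_s\setminus A_t$, which is a box exactly when $t,s$ differ in one coordinate. But you only say you would ``try'' to find such a pair, with a speculative fallback through equality cases of the partition theorem. You were right that a star-merge, which is all the paper's sketch invokes, is not enough here: $A_t$ absorbs three of the six pieces $A_u\cap A_v$ and leaves three, giving $4=S$ boxes rather than $S-1$.

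\textbf{The missing idea is a volume count inside one skeleton box.}
\begin{enumerate}
\item Since $N\in\{0,2\}$, every point of $A_t$ lies in exactly one other skeleton box. Hence $A_t=\bigsqcup_{s\neq t}(A_t\cap A_s)$.
\item Each piece has $|A_t\cap A_s|=2^{\,j-d(t,s)}$, where $d$ is Hamming distance. Dividing by $|A_t|=2^j$ gives $\sum_{s\neq t}2^{-d(t,s)}=1$ for every $t\in T$.
\item For $S=4$ this sum has three terms, each with $d(t,s)\ge 1$. The only solution is the distance multiset $\{1,2,2\}$. So some $s$ differs from $t$ in exactly one coordinate $i_0$.
\item Then $A_s\setminus A_t=\{x\in A_s: x_{i_0}=t_{i_0}\}$ is a proper box.
\item Let $\{u,w\}=T\setminus\{s,t\}$. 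No point lies in three skeleton boxes, so $A_t$, $A_s\setminus A_t$ and $A_u\cap A_w$ partition $U$.
\end{enumerate}
This gives $p(U)\le 3$, hence $3+c\ge 2^j$, i.e.\ $S+c\ge 2^j+1$.

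\textbf{Your treatment of $S=0$ does not dispose of it.} ABHK gives only $c\ge 2^j$, and this is attained, for example by the product partition $\{\{0,1\},\{2\}\}^j$. So the empty skeleton has $S+c=2^j$, and the statement as written needs $S\ge 1$. You should flag this as an exception rather than claim it is handled.
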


\begin{proof}[Proof ideas]
Let $p(U)$ be the minimum number of proper boxes needed to \emph{partition} $U$. The masks
partition $H$ into $c$ boxes, so a $p(U)$-box partition of $U$ together with the masks tiles
$[3]^j$; by the Alon--Bohman--Holzman--Kleitman partition bound, $p(U)+c\ge 2^j$, i.e.
$c\ge 2^j-p(U)$. A wall-breaker (with $S+c\le 2^j$) therefore requires $p(U)\ge S$: a support
that needs at least as many boxes to \emph{partition} as the $S$ that \emph{double-cover} it. In
each listed case one shows $p(U)\le S-1$ (a partition of $U$ one box cheaper than the
double cover), forcing $S+c\ge 2^j+1$: for (1), each color class of a bipartite intersection
graph already partitions $U$; for (2)--(3), a star-merge of the intersection graph yields a small
partition; (4) is an exhaustive computation. The five explicit constructions of
Theorems~\ref{thm:leapfrog} and~\ref{thm:fourstep} (and a $(5,10,23)$ skeleton) all have
$g^\ast=S/(S-1)\in\{\tfrac65,\tfrac87,\tfrac{10}{9}\}$, the general value when $S+c=2^j+1$.
\end{proof}

The wall has a clean source: the Alon--Bohman--Holzman--Kleitman partition bound, applied to the
support and the holes together.

\begin{proposition}[ABHK barrier]\label{prop:abhk-barrier}
For any $(j,S,c)$ skeleton construction, $p(U)+c\ge 2^j$, where $p(U)$ is the minimum number of
proper boxes partitioning the support $U$. Consequently
\[
  S+c\ \ge\ 2^j+\bigl(S-p(U)\bigr),
\]
and a construction with $S+c\le 2^j$ --- one whose asymptotic constant $S/(2^j-c)$ does not exceed
$1$ --- requires $p(U)\ge S$: a support that needs at least as many proper boxes to
\emph{partition} as the $S$ all-pair boxes that \emph{double-cover} it.
\end{proposition}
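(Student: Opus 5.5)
The plan is to glue a partition of $U$ to the masks and apply the Alon--Bohman--Holzman--Kleitman partition theorem to the result. By definition of a $(j,S,c)$ skeleton construction, every point $x\in[3]^j$ has $N(x)\in\{0,2\}$. Hence $[3]^j$ is the disjoint union of the support $U=\{N=2\}$ and the hole-set $H=\{N=0\}$.

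First, I note that $p(U)$ is well defined and finite. Every singleton $\{x\}$ is a proper sub-box, since all its sides are one-element sets. So $U$ can always be partitioned into proper boxes, for instance into its $|U|$ points, and a minimum $p(U)$ exists. The degenerate case $U=\varnothing$ gives $p(U)=0$; then the masks alone tile $[3]^j$, and the argument below still applies.

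Second, I take an optimal partition $\mathcal P$ of $U$ into $p(U)$ proper boxes, together with the $c$ mask boxes partitioning $H$. Since $U\cap H=\varnothing$ and $U\cup H=[3]^j$, every point of $[3]^j$ lies in exactly one box of $\mathcal P\cup\{\text{masks}\}$: exactly one box of $\mathcal P$ if it lies in $U$, and exactly one mask if it lies in $H$. This is therefore a partition of $[3]^j$ into $p(U)+c$ proper sub-boxes. By the partition theorem of~\cite{ABHK} recalled in the introduction, $p(U)+c\ge 2^j$. One small point needs a check: the theorem concerns partitions into proper sub-boxes, and $j\ge 1$ is needed for that notion to be meaningful. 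For $j=0$ there are no proper boxes at all, and I would exclude this case as vacuous.

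Third, the displayed inequality follows by adding $S$ to both sides and rearranging: $S+c\ge S+2^j-p(U)=2^j+(S-p(U))$. For the final clause, suppose $S+c\le 2^j$. Then $2^j\ge S+c\ge 2^j+(S-p(U))$, which forces $p(U)\ge S$. The parenthetical equivalence is immediate when $c<2^j$, which is the regime where the asymptotic constant is defined: $S/(2^j-c)\le 1$ if and only if $S\le 2^j-c$. Nothing here is a genuine obstacle. The only step with content is the invocation of the Alon--Bohman--Holzman--Kleitman theorem, and the one thing to confirm is that the glued family really consists of proper boxes covering each point once, which holds by disjointness of $U$ and $H$.
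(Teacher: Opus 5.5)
Your proposal is correct and is essentially the paper's proof. You glue a minimum partition of $U$ to the $c$ masks to obtain a partition of $[3]^j=U\sqcup H$ into $p(U)+c$ proper boxes, invoke the Alon--Bohman--Holzman--Kleitman bound, and rearrange. (One harmless slip: under the paper's definition, at $j=0$ the empty product $[3]^0$ is vacuously a proper box. That case is therefore not vacuous, but it trivially satisfies $p(U)+c\ge 1=2^0$.)
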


\begin{proof}
The $c$ masks partition $H$, and any partition of $U$ into $p(U)$ proper boxes is disjoint from
them; together they partition $U\sqcup H=[3]^j$ into $p(U)+c$ proper boxes, so $p(U)+c\ge 2^j$ by
the partition bound (the statement of Section~\ref{sec:intro}). Hence
$S+c\ge S+(2^j-p(U))=2^j+(S-p(U))$, and $S+c\le 2^j$ forces $S\le p(U)$.
\end{proof}

The known skeleton families --- including the $(3,6,3)$ leapfrog of
Theorem~\ref{thm:leapfrog}, the $(4,8,9)$ four-step lift of Theorem~\ref{thm:fourstep}, and a
$(5,10,23)$ example --- all sit exactly at the wall, $S+c=2^j+1$; in particular $p(U)\le S$ in
each, so none is a breaker. We have verified exhaustively that \emph{no} wall-breaker exists among
all $279$ valid $j=3$ all-pair skeletons.

\begin{remark}\label{rem:wall-q}
The construction side thus approaches the conjectural threshold $f(d)=2^d$ from above: each known
family is exactly one box past the partition bound, with asymptotic constant
$S/(S-1)\to 1^{+}$. Whether \emph{some} skeleton construction can reach $S+c=2^j$ --- which is
what pushing the constant to $1$, and so disproving Question~\ref{q:main} from the construction
side, would require --- is open, and Proposition~\ref{prop:abhk-barrier} shows it would demand
exactly the kind of support that is as hard to partition as to double-cover. We regard closing
this gap as of a piece with Question~\ref{q:main} itself, and we are not aware of any wall-breaker.
\end{remark}

\section{Is the conjecture true?}\label{sec:belief}

A reader is entitled to ask what we think the answer to Question~\ref{q:main} actually is. We lean
firmly toward \emph{yes}: we conjecture that $f(d)\ge 2^d$ for every $d$. Our reason lies almost
entirely on the construction side rather than the lower-bound side.

The lower-bound methods of this paper provably stop short of $2^d$ for $d\ge 6$. The profile
ceiling (Proposition~\ref{prop:ceiling}) caps every bound that reads only the $k$-profile at
$57<64$, and the line-rigidity invariant caps at exactly $60<64$ (Section~\ref{sec:limits}). So
our inability to \emph{prove} $f(6)\ge 64$ measures the reach of the present tools, not the
location of the answer. What persuades us is the construction wall of Theorem~\ref{thm:wall}:
every skeleton construction we have examined --- the $(3,6,3)$ leapfrog, the $(4,8,9)$ four-step,
a $(5,10,23)$ example, and exhaustively all $279$ all-pair skeletons at $j=3$ --- satisfies
$S+c=2^j+1$, giving asymptotic constant $S/(S-1)>1$, never $\le 1$. The ABHK barrier
(Proposition~\ref{prop:abhk-barrier}) explains why this is not a coincidence: a construction whose
asymptotic constant drops to $1$ or below --- which is what disproving Question~\ref{q:main} from
the construction side would require --- would need a support $U$ at least as expensive to
\emph{partition} as to \emph{double-cover} ($p(U)\ge S$), inverting the very mechanism, overlap, by
which double covers gain their efficiency and which a partition cannot use. In every case we can
analyze $p(U)\le S-1$, and we can neither construct a wall-breaker nor see where the slack to build
one would come from.

We offer this as a belief, not a theorem. The honest caveat is that conjectures of the ``needs a
genuinely new idea'' type do sometimes fall to a surprising construction at large $d$, and our
evidence is a handful of small cases and one construction family rather than a calibrated
prediction. If the conjecture is true, a proof seems to us to need one of two ingredients, which
may turn out to be the same one: a proof of the wall $S+c\ge 2^j+1$ in general, or a lower-bound
invariant that reads the geometric arrangement of the singleton-trace mass across parallel planes
rather than only the $k$-profile. The line-rigidity bound of Section~\ref{sec:linerigidity} --- the
first here to break the profile ceiling --- is the earliest crack of the second kind.

\section{Formal verification}\label{sec:lean}

The lower-bound development is formalized in Lean~4 with Mathlib (toolchain
\texttt{leanprover/lean4:v4.30.0}). A box is modeled as a family of sides
$S:\mathrm{Fin}\,d\to\mathrm{Finset}(\mathrm{Fin}\,3)$ with $1\le|S_i|\le 2$; \texttt{IsDoubleCover}
is the predicate that every point is covered exactly twice. The formalization includes:

\begin{itemize}
\item the volume identity (Lemma~\ref{lem:volume}), the slicing inequality
(Lemma~\ref{lem:slice}, via the projected-cover construction), and the convexity bound
(Lemma~\ref{lem:convex});
\item the $d=4,5$ theorems \texttt{f4\_ge\_19} and \texttt{f5\_ge\_33}, together with the
resolution corollaries \texttt{cover\_gt\_16\_d4} and \texttt{cover\_gt\_32\_d5}
(Corollary~\ref{cor:resolve}). Each takes as an explicit hypothesis the base bound $f(3)\ge 11$
(stated for all double covers of $[3]^3$), reflecting that $f(3)=11$ is an exhaustive-search
input rather than a formalized computation;
\item the crossing lemma (Lemma~\ref{lem:crossing}) as \texttt{lemmaC}, reduced to the two
decidable cores \texttt{core1} (a $27$-case \texttt{decide}) and \texttt{core2} (the
$2\times2$ permutation classification), and the full aggregation chain
(plane traces, plane counts, line traces) culminating in \texttt{aggregated\_rigidity};
\item the theorem \texttt{f6\_ge\_60\_unconditional} (Theorem~\ref{thm:linerigidity}), which
takes \emph{no} hypotheses: the rigidity inequality~\eqref{eq:aggregate} is discharged internally
by \texttt{aggregated\_rigidity}, and the final \texttt{omega} step concludes $60\le m$.
\end{itemize}

The headline statement is
\[
  \texttt{theorem f6\_ge\_60\_unconditional}\ \{\iota\}\ [\mathrm{Fintype}\,\iota]\
  (B:\iota\to\mathrm{Box}\,6)\ (h:\mathrm{IsDoubleCover}\,B):\ 60\le\mathrm{Fintype.card}\,\iota,
\]
and \texttt{\#print axioms f6\_ge\_60\_unconditional} reports dependence on exactly
\[
  \texttt{[propext, Classical.choice, Quot.sound]},
\]
the three standard axioms of the Lean/Mathlib kernel --- no \texttt{sorry}, no additional axioms,
no appeal to \texttt{native\_decide}. The same holds for the $d=4,5$ corollaries (modulo the
$f(3)\ge 11$ hypothesis). The development builds cleanly (\texttt{lake build}, all jobs
succeeding). The base values $f(2)=6$, $f(3)=11$ and the exhaustive enumeration backing
Lemma~\ref{lem:crossing} (all $84\,102$ double covers of $[3]^2$) are verified by separate
computer programs; these are computational inputs, deliberately kept outside the Lean kernel and
cited as such.

\section{Note on methodology and attribution}\label{sec:methodology}

The mathematics in this paper was produced through an AI-assisted research process that the
author designed and directed. The role of the author was architectural: choosing the problem,
framing each sub-question, deciding which leads to pursue, and --- most importantly --- verifying
every claim, by independent computation and by formal proof, before accepting it. The reasoning
that produced several of the key ideas (the modular refinement of Section~\ref{sec:mod4}, the
crossing lemma of Section~\ref{sec:linerigidity}, and the constructions of
Section~\ref{sec:upper}) was carried out by large language models used as reasoning engines:
Claude (Anthropic) in the role of framing and verification, and GPT-5.5
(OpenAI) as a cold reasoner. We follow the emerging precedent of crediting such assistance in the
text rather than as authorship.

We are deliberate about this because the discipline that makes the process trustworthy is exactly
the discipline a referee would demand: nothing here is asserted because a model asserted it. Every
load-bearing lower bound is either formally verified in Lean (Section~\ref{sec:lean}) or reduced
to a finite computation we ran independently, and every construction is an explicit object whose
validity is a finite check. Where a model's proposed argument was wrong --- and at least one
confidently-stated ``proof'' was --- it was caught at the verification step and is not part of
this paper. A fuller account of the methodology appears in a companion note.

\section*{Acknowledgements}
The author thanks the developers of Lean and Mathlib, and the authors of~\cite{PatternBoost} for
posing the problem in a form that invited exactly this kind of attack.

\appendix
\section{Summary of bounds}\label{app:summary}

For convenience we collect the bounds proved or used here. Lower bounds $f(d)\ge\cdot$:
\[
\begin{array}{c|ccccccc}
d & 2 & 3 & 4 & 5 & 6 & 7 & 8\\\hline
\text{volume }\lceil 2(3/2)^d\rceil & 5 & 7 & 11 & 16 & 23 & 35 & 52\\
\text{modular (Thm.\,\ref{thm:mod4})} & 3 & 4 & 7 & 11 & 19 & 32 & 57\\
\text{slicing (Thm.\,\ref{thm:slice})} & - & - & 19 & 33 & 57 & 97 & 164\\
\text{rigidity (Thm.\,\ref{thm:linerigidity})} & - & - & - & - & 60 & 103 & 175
\end{array}
\]
(rigidity at $d\ge 7$ is the slicing bootstrap off $f(6)\ge 60$). Best known upper bounds
$f(d)\le\cdot$, from the lifts of Section~\ref{sec:upper} combined with the small covers
of~\cite{BLLW,PatternBoost}:
\[
\begin{array}{c|cccccccc}
d & 2 & 3 & 4 & 5 & 6 & 7 & 8 & 9\\\hline
f(d)\le & 6 & 11 & 21 & 41 & 81 & 159 & 315 & 625
\end{array}
\]
Exact values: $f(2)=6$, $f(3)=11$. Sandwich at $d=6$: $60\le f(6)\le 81$. Question~\ref{q:main}
is settled (affirmatively) for $d\le 5$ and open for $d\ge 6$.

\end{document}